\def\R{\mathbb{R}}
\numberwithin{equation}{section}
\newtheorem{thm}{Theorem}[section]
\newtheorem{lem}{Lemma}[section]
\newtheorem{remark}{Remark}[section]
\newcommand{\Extend}[5]{\ext@arrow0099{\arrowfill@#1#2#3}{#4}{#5}}
\begin{document}

\setcounter{page}{1}

\title[Global rough solution for semilinear heat equation]{Global rough solution for $L^2$-critical semilinear heat equation in the negative Sobolev space}

\author{Avy Soffer}
\address{Rutgers University\\
Department of Mathematics\\
110 Frelinghuysen Rd.\\
Piscataway, NJ, 08854, USA\\}
\address{Department of Mathematics\\
Hubei Key Laboratory of Mathematical Science\\
Central China Normal University\\
Wuhan 430079, China.\\}
\email{soffer@math.rutgers.edu}
\thanks{}

\author{Yifei Wu}
\address{Center for Applied Mathematics\\
Tianjin University\\
Tianjin 300072, China}
\email{yerfmath@gmail.com}
\thanks{}

\author{Xiaohua Yao}
\address{Department of Mathematics\\
Hubei Key Laboratory of Mathematical Science\\
Central China Normal University\\
Wuhan 430079, China.\\}
\email{yaoxiaohua@mail.ccnu.edu.cn}

\subjclass[2000]{35K05, 35B40, 35B65.}

\date{}

\maketitle

\begin{abstract}\noindent
In this paper, we consider the Cauchy global problem for the $L^2$-critical semilinear heat equations
$
\partial_t h=\Delta h\pm |h|^{\frac4d}h,
$
with $h(0,x)=h_0$, where  $h$ is an unknown real function defined on $ \R^+\times\R^d$.  In most of the studies on this
subject, the initial data $h_0$ belongs to Lebesgue spaces $L^p(\R^d)$ for some $p\ge 2$ or
to subcritical Sobolev space $H^{s}(\R^d)$ with $s>0$. We here prove that  there exists some positive constant $\varepsilon_0$ depending on $d$, such that the Cauchy problem is locally and globally well-posed for any initial data $h_0$ which is radial, supported away from origin
and in the negative Sobolev space $\dot H^{-\varepsilon_0}(\R^d)$ including $L^p(\R^d)$ with certain $p<2$ as subspace. Furthermore, unconditional uniqueness, and $L^2$-estimate both as time $t\to0$ and $t\to +\infty$ were considered.

\end{abstract}


 \baselineskip=20pt

\section{Introduction}
Consider the initial value problem for a semilinear heat equation:
\begin{equation}\label{heat'}
   \left\{ \aligned
    &\partial_t h=\Delta h\pm |h|^{\gamma-1}h, \\
    &h(0,x)=h_0(x), \quad x\in \R^d,
   \endaligned
  \right.
 \end{equation}
where  $h(t,x)$ is an unknown real function defined on $ \R^+\times\R^d$, $d\ge 2$, $\gamma>1$. The positive sign ``+" in nonlinear term of \eqref{heat'} denotes focusing source,   and the negative sign ``-" denotes the defocusing one. The Cauchy problem \eqref{heat'} has been extensively studied in Lebesgue space $L^p(\R^d)$ by many peoples, see e.g. \cite{Bre-Caz, Bre-Fre, Bre-Pel, Ga-Va, Giga, IKNW, Ma-Mer, Mer, Miao-Zhang, Monn, Ni-Sacks, Ribaud, Ruf-Terrano, Tan, Weiss1, Weiss2} and so on. The equation
 enjoys an interesting property of scaling invariance
$$
h_\lambda(t,x):=\lambda^{2/(\gamma-1)}h(\lambda^2t, \lambda x), \ h_\lambda(0,x):= \lambda^{2/(\gamma-1)}h_0(\lambda x),  \ \lambda>0,
$$
that is, if $h(t,x)$ is the solution of heat equation \eqref{heat'}, then $h_\lambda(t,x)$ also does with the scaling data $\lambda^{2/\gamma}h_0(\lambda x)$.
An important fact is that Lebsgue space $L^{p_c}(\R^d)$ with $p_c=\frac{d(\gamma-1)}{2}$ is the only one invariant under the same scaling transform: $$h_0(x)\mapsto\lambda^{2/(\gamma-1)}h_0(\lambda x).$$

If we consider the initial data $h_0\in L^p(\R^d)$, then the scaling index
$$p_c=\frac{d(\gamma-1)}{2}$$
plays a critical role on the local/global well-posedness of \eqref{heat'}. Roughly speaking, one can divide the dynamics of \eqref{heat'} into the following three different regimes:  (A) {\it the subcritial case $p>p_c $},  (B) {\it the critical case $p=p_c$}, (C) {\it the supercritical case $p<p_c$}.  Specifically, In cases (A) and (B), i.e.  $p\ge p_c$, when  $p>\gamma$, Weissler in \cite{Weiss1} proved the local existence and uniqueness of solution $h\in C([0, T); L^q(\R^d))\cap L^\infty_{loc}((0,T]; L^\infty(\R^d)) $. Later, Brezis and Cazenave \cite{Bre-Caz} proved the unconditional uniquessness of Weissler's solution.  In {\it double critical case} $p=p_c=\gamma$ ( i.e. $p=\gamma=\frac{d}{d-2}$ ), the local conditional wellposedness of the problem \eqref{heat'} was due to Weissler in \cite{Weiss2}, but the unconditional uniqueness fails, see Ni-Sacks \cite{Ni-Sacks}, Terraneo \cite{Terran}. In the supercritical case $(C)$, i.e. $p<p_c$, it seems that  there exists no local solution in any reasonable sense for some initial data $h_0\in L^p(\R^d)$. In particular, in focusing case, there exists a nonnegative function $h_0\in L^p(\R^d)$ such that the \eqref{heat'} does not admit any nonnegative classical $L^p$-solution in $[0,T)$ for any $T>0$, see e.g. Brezis and Cabr\'e \cite{Bre-Cab}, Brezis and Cazenave \cite{Bre-Caz}, Haraux-Weissler\cite{Har-Weissler} and  Weissler \cite {Weiss1, Weiss2}. Also, one see book Quitnner-Souplet\cite{Qui-Soup} for many related topics and references.

In this paper, we mainly concerned with the local and global existence of solution  for some supercritical initial data $h_0\in L^p(\R^d)$ by $p<p_c$  and more generally, initial data in $ \dot H^{-\epsilon}$. For simplicity, we only consider  the Cauchy
problem for the $L^2$-critical semilinear heat equations,
\begin{equation}\label{heat}
   \left\{ \aligned
    &\partial_t h=\Delta h+\mu|h|^{\frac4d}h, \\
    &h(0,x)=h_0(x), \quad x\in \R^d,
   \endaligned
  \right.
 \end{equation}
That is, $p_c=2$\ ( i.e $\gamma=1+\frac4d $ ), we will prove that  there exists some positive constant $\varepsilon_0$ depending on $d$, such that the Cauchy problem is locally and globally wellposed for any initial data $h_0$ is radial, supported away from origin
and in the negative Sobolev space $\dot H^{-\varepsilon_0}(\R^d)$, which includes certain $L^p$-space with $p<p_c=2$ as a subspace (see Remark \ref{rem:01} below). We remark that, at present the the range of  $\epsilon_0$ in the following theorem may not be optimal to local and global existence of solution of the problem \eqref{heat}. On the other hand, we also mention that a result in Brezis and Freidman\cite{Bre-Fre} implies that the problem \eqref{heat} has no any solution ( even weak one) with a Dirac initial data $\delta$, which is in $H^{-\epsilon}(\R^d)$ for any $s>d/2$.

\begin{thm} \label{thm:main1}
Let $ \mu=\pm1$ and
 \begin{equation}\label{index}
\varepsilon_0\in\aligned
    &\Big[0, \frac{d-1}{d+2}\Big), \ \ d\ge 2.\\
   \endaligned
 \end{equation}
Suppose that $h_0\in \dot H^{-\varepsilon_0}(\R^d)$ is a radial initial data satisfying
$
\mbox{supp } h_0\subset \{x:|x|\ge1\}.
$
Then there exists a time $\delta=\delta(h_0)>0$ and a unique strong solution $$h\in C([0,\delta); L^2(\R^d)+\dot H^{-\varepsilon_0}(\R^d))\cap L^\frac{2(2+d)}{d}_{tx}([0,\delta]\times\R^d)$$ to the equation \eqref{heat} with the initial data $h_0$. Moreover, the  following two statements hold:
\begin{itemize}
\item[(1).] If $d>4$, then the solution $h$ is unique in the following sense that there exists  a unique function $w$ in $C([0,\delta], L^2(\R^d))$ such that
 \begin{equation}\label{struct}
h=e^{t\Delta}h_0+w.
 \end{equation}
\item[(2).] If  $\|h_0\|_{\dot H^{-\varepsilon_0}(\R^d)}$ is small enough, then the solution is global in time and satisfies the following decay estimate for $d\ge4$,
$$
\|h(t)\|_{L^2}\lesssim t^{-\frac{\varepsilon_0}{2}}\|h_0\|_{\dot H^{-\varepsilon_0}}, \ \ t>0.
$$
\end{itemize}
\end{thm}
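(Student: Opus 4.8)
The plan is to solve the Duhamel formulation
$$
h(t)=e^{t\Delta}h_0+\mu\int_0^t e^{(t-s)\Delta}\big(|h|^{4/d}h\big)(s)\,ds
$$
by a contraction mapping in $X_\delta:=L^{2(d+2)/d}_{t,x}([0,\delta]\times\R^d)$, and then to rerun the argument on $[0,\infty)$ under a smallness assumption. Writing $h=e^{t\Delta}h_0+w$, the unknown $w$ satisfies $w(0)=0$ and $w=\Phi(w):=\mu\int_0^t e^{(t-s)\Delta}\big(|e^{s\Delta}h_0+w|^{4/d}(e^{s\Delta}h_0+w)\big)\,ds$. With $q=\frac{2(d+2)}{d}$ and $q'=\frac{2(d+2)}{d+4}$ one has $(1+\tfrac4d)q'=q$, so the mass-critical Hölder bookkeeping together with the retarded heat--Strichartz estimate $\big\|\int_0^t e^{(t-s)\Delta}F(s)\,ds\big\|_{C([0,\delta];L^2)\cap X_\delta}\lesssim\|F\|_{L^{q'}_{t,x}}$ shows $\Phi$ maps a small ball of $X_\delta$ into itself and contracts there, \emph{provided} the linear data term $\|e^{t\Delta}h_0\|_{X_\delta}$ is controlled by $\|h_0\|_{\dot H^{-\varepsilon_0}}$. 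The same retarded estimate places $w$ in $C([0,\delta];L^2)$ (the Duhamel integral of an $L^{q'}_{t,x}$ source lands there), while $e^{t\Delta}h_0\in C([0,\delta);\dot H^{-\varepsilon_0})$ trivially; this produces the claimed regularity $h\in C([0,\delta);L^2+\dot H^{-\varepsilon_0})\cap L^{2(d+2)/d}_{t,x}$.

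The crux --- and what I expect to be the main obstacle --- is the linear estimate $\|e^{t\Delta}h_0\|_{L^{2(d+2)/d}_{t,x}([0,\delta]\times\R^d)}\lesssim\|h_0\|_{\dot H^{-\varepsilon_0}}$, which is scale-supercritical for $\varepsilon_0>0$ and genuinely false without the radial/support hypotheses; both must be used. First, since $\operatorname{supp}h_0\subset\{|x|\ge1\}$ and $e^{t\Delta}$ is strongly smoothing, for $t\le\delta$ small the linear solution is, up to a rapidly decaying remainder, concentrated in $\{|x|\gtrsim\tfrac12\}$, so the problem localizes to that region. Second, radiality lets me conjugate by the weight $r^{(d-1)/2}$: the function $g=r^{(d-1)/2}h_0$ solves the one-dimensional heat equation with the inverse-square potential $\tfrac{(d-1)(d-3)}{4r^2}$, which is bounded on $r\gtrsim1$, and on that region $\|h_0\|_{\dot H^{-\varepsilon_0}(\R^d)}$ is comparable to $\|g\|_{\dot H^{-\varepsilon_0}(\R)}$ (the weight being smooth and bounded below there, commutators being of lower order). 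One then needs the one-dimensional bound $\|e^{t\Delta_1}g\|_{L^{2(d+2)/d}_{t,r}([0,\delta]\times\R)}\lesssim\|g\|_{\dot H^{-\varepsilon_0}}$, and here the threshold appears: the $\dot H^{-s}(\R)$ exponent matching $L^{2(d+2)/d}_{t,r}$ by scaling is exactly $s=\tfrac{d-1}{d+2}$, so the hypothesis $\varepsilon_0\in[0,\tfrac{d-1}{d+2})$ is precisely what makes this a subcritical (finite-time) estimate, carrying in addition a positive power of $\delta$ that supplies the smallness for the fixed point. The companion decay fact $\|e^{t\Delta}h_0\|_{L^2}\lesssim t^{-\varepsilon_0/2}\|h_0\|_{\dot H^{-\varepsilon_0}}$ is immediate from Plancherel, since $\sup_\xi e^{-2t|\xi|^2}|\xi|^{2\varepsilon_0}\lesssim t^{-\varepsilon_0}$.

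For part (2), I would rerun the contraction on $[0,\infty)$: splitting into $t\le1$ and $t\ge1$ and combining $\|e^{t\Delta}h_0\|_{L^2}\lesssim t^{-\varepsilon_0/2}$ on $\{t\ge1\}$ with the standard lossless heat--Strichartz estimate $e^{t\Delta}:L^2\to L^{2(d+2)/d}_{t,x}([1,\infty)\times\R^d)$ yields $\|e^{t\Delta}h_0\|_{L^{2(d+2)/d}_{t,x}(\R^+\times\R^d)}\lesssim\|h_0\|_{\dot H^{-\varepsilon_0}}$, small when the data is small, so the fixed point runs globally. The $L^2$-decay of the nonlinear solution for $d\ge4$ is then a bootstrap: from $w(t)=\mu\int_0^t e^{(t-s)\Delta}(|h|^{4/d}h)(s)\,ds$ and the smoothing $\|e^{\tau\Delta}\|_{L^{r'}\to L^2}\lesssim\tau^{-\frac d2(\frac1{r'}-\frac12)}$, choosing $r'$ so that $\||h|^{4/d}h\|_{L^{r'}_x}$ is controlled by a product $\|h(s)\|_{L^2}^{a}\|h(s)\|_{L^q_x}^{b}$ with the critical exponents, one inserts the a priori bounds $\|h(s)\|_{L^2}\lesssim s^{-\varepsilon_0/2}$ and $\|h\|_{L^{q}_{t,x}}<\infty$; the resulting integral $\int_0^t(t-s)^{-\theta}s^{-\beta}\,ds$ converges and reproduces the self-similar rate $t^{-\varepsilon_0/2}$, the restriction $d\ge4$ (so $\tfrac4d\le1$) being what keeps this Hölder bookkeeping and the integrability consistent with $\varepsilon_0<\tfrac{d-1}{d+2}$.

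Finally, part (1): the uniqueness in the class $h=e^{t\Delta}h_0+w$ with $w\in C([0,\delta];L^2)$ follows by comparing two such solutions. Their difference lies in $C([0,\delta];L^2)$, vanishes at $t=0$, and satisfies a Duhamel identity to which the same nonlinear estimates apply, yielding a contraction (equivalently a Grönwall inequality) on a short subinterval that then propagates across $[0,\delta]$. The restriction $d>4$ enters only through the exponents in these difference estimates for the non-smooth nonlinearity $|h|^{4/d}h$.
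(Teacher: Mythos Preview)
Your overall architecture (contraction in $L^{2(d+2)/d}_{t,x}$, writing $h=e^{t\Delta}h_0+w$, bootstrap for the decay) matches the paper, but the two places where real work is needed are handled quite differently, and in both your sketch has gaps.

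\textbf{The supercritical linear estimate.} The paper does \emph{not} try to prove $\|e^{t\Delta}h_0\|_{L^{2(d+2)/d}_{t,x}([0,\delta])}\lesssim \delta^{\alpha}\|h_0\|_{\dot H^{-\varepsilon_0}}$ directly. Instead it introduces a frequency parameter $N$ and splits $h_0=v_0+w_0$ with $v_0=\chi_{\ge 1/2}P_{\ge N}h_0$ and $w_0=h_0-v_0$. A mismatch estimate (exploiting $\operatorname{supp}h_0\subset\{|x|\ge1\}$) shows $w_0\in L^2$ with $\|w_0\|_{L^2}\lesssim N^{\varepsilon_0}\|h_0\|_{\dot H^{-\varepsilon_0}}$, so its linear evolution is handled by the standard $L^2\to L^{2(d+2)/d}_{t,x}$ estimate and made small by choosing $\delta$ small. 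For $v_0$, the paper proves a genuinely supercritical bound $\|e^{t\Delta}f\|_{L^q_{t,x}(\R^+)}\lesssim\||\nabla|^\gamma f\|_{L^2}$ (radial $f$ supported away from the origin) by combining the \emph{radial Sobolev embedding} $\|f\|_{L^\infty}\lesssim\||x|^{\alpha}f\|_{L^\infty}\lesssim\||\nabla|^s f\|_{L^2}$ with the smoothing $\|\nabla e^{t\Delta}f\|_{L^2_{t,x}}\lesssim\|f\|_{L^2}$ and interpolating; applied to the high-frequency piece $v_0$ together with Bernstein this yields $\|e^{t\Delta}v_0\|_{L^{2(d+2)/d}_{t,x}(\R^+)}\lesssim N^{-(d-1)/(d+2)+\varepsilon_0+\epsilon}\|h_0\|_{\dot H^{-\varepsilon_0}}$, small for $N$ large. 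Your one-dimensional reduction via conjugation by $r^{(d-1)/2}$ is a different route; your scaling heuristic correctly identifies the threshold $\tfrac{d-1}{d+2}$, but several steps are left unjustified: the conjugated operator is $\partial_r^2-\tfrac{(d-1)(d-3)}{4r^2}$, not the free $1$D Laplacian; the $L^q(\R^d)\leftrightarrow L^q_r$ conversion carries the singular weight $r^{(d-1)(1-q/2)}$, which you cannot discard since $e^{t\Delta}h_0$ is not supported in $\{|x|\gtrsim\tfrac12\}$; and the asserted equivalence $\|h_0\|_{\dot H^{-\varepsilon_0}(\R^d)}\sim\|r^{(d-1)/2}h_0\|_{\dot H^{-\varepsilon_0}(\R)}$ for negative Sobolev norms is not obvious. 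The paper's frequency decomposition sidesteps all of this.

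\textbf{Unconditional uniqueness.} Here your sketch misses the essential difficulty. The hypothesis is only that $w_1,w_2\in C([0,\delta];L^2)$, \emph{not} that they lie in $L^{2(d+2)/d}_{t,x}$, so ``the same nonlinear estimates'' from the existence proof are unavailable for the terms $|w_j|^{4/d}w$. The paper treats the cross term $|e^{s\Delta}h_0|^{4/d}|w|$ via a subcritical inhomogeneous estimate (Lemma~2.3), and handles $|w_j|^{4/d}|w|$ using \emph{maximal $L^p$-regularity} of the heat flow together with the Sobolev embedding $(-\Delta)^{-1}:L^{2d/(d+4)}\to L^2$; H\"older then gives $\||w_j|^{4/d}w\|_{L^{2d/(d+4)}}\le\|w_j\|_{L^2}^{4/d}\|w\|_{L^2}$. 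This is precisely where the restriction $d>4$ enters (so that $2d/(d+4)>1$). Your proposal does not identify this mechanism.
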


\begin{remark}\label{rem:01}
 If $h_0\in L^p$ for some $p<2$, then there exists some $\epsilon_0>0$ such that $h_0\in\dot H^{-\varepsilon_0}(\R^d) $ and
$$\|h_0\|_{\dot H^{-\varepsilon_0}(\R^d)}\lesssim \|h_0\|_{L^p(\R^d)}$$
by the Sobolev embedding estimate ( see e.g. Lemma \ref{lem:radial-Sob} below ). Thus,  Theorem \ref{thm:main1} shows that the solution $h$ of the equation \eqref{heat}
exists locally for any radial and supported away from zero initial datum $h_0$  in $L^p(\R^d)$ as $p\in \Big(\frac{d^2+4d-2}{2d^2+2d},2\Big)$ and $ d\ge 2.$
 \end{remark}

\begin{remark}\label{rem:03}
It seems that the restriction $d>4$ is necessary for unconditional uniqueness. In fact, when $d=4$, the uniqueness problem is related to the ``double critical'' case ( i.e. $p=p_c=\gamma=\frac{d}{d-2}=2$ ). It was well-known that the unconditional uniqueness failed by Ni-Sacks \cite{Ni-Sacks} and Brezis and Cazenave \cite{Bre-Caz}.
 \end{remark}
Finally, it is worth mentioning  that in the defocusing case, the smallness restriction on the initial datum in the statement (2) is not necessary for global existence. Indeed, we have $h(\delta)\in L^2(\R^d)$, then it follows by considering the solution from $t=\delta$. Moreover, it is  easy to find a large class of $h_0$ satisfying the conditions of theorem above. As described in Remark \ref{rem:01},  our result  shows that the solution $h$ of the equation \eqref{heat}
exists globally on $\R+$, for any
the initial datum $h_0$ in $L^p(\R^d)$ with some $p<2$,  which is radial and supported away from zero.

The paper is organized as follows: In Section 2, we will list several useful lemmas about Littlewood-Paley theory, and space-time estimates for the solution of linear heat equation. Then in Section 3, we will give the proof of the main results, respectively.

\section{Preliminary}

\subsection{Littlewood-Paley multipliers and related inequalities}

Throughout this paper, we write $A\lesssim B$ to signify
that there exists a constant $c$ such that $A\leq cB$, while we denote $A\sim B$ when
$A\lesssim B\lesssim A$. We first define the {\it  Littlewood-Paley projection multiplier}. Let $\varphi(\xi)$ be a fixed real-valued radially symmetric bump function adapted to the ball $\{\xi\in \R^d: \ |\xi|\le 2\}$ which equals 1 on the ball $\{\xi\in \R^d: \ |\xi|\le 1\}$. Define a {\it dyadic number} to any number $N\in 2^{\mathbf{Z}}$ of the form $N=2^j$ where $j\in \mathbf{Z}$ ( the integer set). For each dyadic number $N$, we define the the Fourier multipliers
$$\widehat{P_{\le N}f}(\xi):=\varphi(\xi/N)\hat{f}(\xi), \ \ \widehat{P_{N}f}(\xi):=\varphi(\xi/N)-\varphi(2\xi/N)\hat{f}(\xi), $$
where $\hat{f}$ denotes the Fourier transform of $f$.  Moreover, define $P_{>N}=I-P_{\le N}$ and  $P_{<N}=P_{\le N}-P_N$, etc. In particular, we have the telescoping expansion:
$$P_{\le N}=\sum_{M\le N} P_M f; \ \ P_{>N}=\sum_{M> N} P_M f$$
where $M$ ranges over dyadic numbers. It was well-known that the Littlewood-Paley operators satisfy the following useful {\it Bernstein inequalities} with $s>0$ and $1\le p\le q\le\infty$ ( see e.g. Tao \cite{tao} ):
$$\|P_{\ge N}f\|_{L^p_x(\R^d)}\lesssim N^{-s}\||\nabla|^s P_{\ge N}f\|_{L_x^p(\R^d)}, \ \||\nabla|^s P_{\le N}f\|_{L^p_x(\R^d)}\lesssim N^{s}\|| P_{\le N}f\|_{L_x^p(\R^d)};$$
$$ \||\nabla|^{\pm s}  P_{\le N}f\|_{L^p_x(\R^d)}\sim N^{\pm s}\|| P_{\le N}f\|_{L_x^p(\R^d)};$$
$$\|P_{N}f\|_{L^q_x(\R^d)}\lesssim N^{(\frac{d}{p}-\frac{d}{q})}\| f\|_{L_x^p(\R^d)}, \ \|P_{\le N}f\|_{L^q_x(\R^d)}\lesssim N^{(\frac{d}{p}-\frac{d}{q})}\|f\|_{L_x^p(\R^d)};$$

Moreover, we also have the following {\it mismatch estimate}, see e.g. \cite{LiZh-APDE}.
\begin{lem}[Mismatch estimates]\label{lem:mismatch}
Let $\phi_1$ and $\phi_2$ be smooth functions obeying
$$
|\phi_j| \leq 1 \quad \mbox{ and }\quad \mbox{dist}(\emph{supp}
\phi_1,\, \emph{supp} \phi_2 ) \geq A,
$$
for some large constant $A$.  Then for $m>0$, $N\ge 1$ and $1\leq p\leq
q\leq \infty$,
\begin{align*}
\bigl\| \phi_1  P_{\le  N} (\phi_2 f)
\bigr\|_{L^q_x(\R^d)}=\bigl\| \phi_1  P_{\ge N} (\phi_2 f)
\bigr\|_{L^q_x(\R^d)}
    \lesssim_m A^{-m+\frac dq-\frac dp}N^{-m} \|\phi_2 f\|_{L^p_x(\R^d)}.
\end{align*}
\end{lem}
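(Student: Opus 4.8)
The plan is to realize $P_{\le N}$ as a convolution operator and to play the Schwartz decay of its kernel against the spatial separation of the two supports. Writing $\check\varphi$ for the inverse Fourier transform of $\varphi$, the relation $\widehat{P_{\le N}g}(\xi)=\varphi(\xi/N)\hat g(\xi)$ means $P_{\le N}g=K_{\le N}*g$ with kernel $K_{\le N}(z)=N^d\check\varphi(Nz)$. Since $\varphi\in C_c^\infty(\R^d)$, its transform $\check\varphi$ is Schwartz, so for every $L>0$ we have $|\check\varphi(w)|\lesssim_L |w|^{-L}$ for $|w|\ge1$; this is the only analytic input.

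First I would dispose of the asserted equality. Because $\mathrm{dist}(\mathrm{supp}\,\phi_1,\mathrm{supp}\,\phi_2)\ge A>0$ the supports are disjoint, hence $\phi_1(\phi_2 f)=0$ pointwise. Using the partition $\mathrm{Id}=P_{\le N}+P_{>N}$ this gives $\phi_1 P_{\le N}(\phi_2 f)=-\phi_1 P_{>N}(\phi_2 f)$, so the two $L^q$ norms coincide; the high-frequency projection $P_{\ge N}$ differs from $P_{>N}$ by at most the single band $P_N$, whose kernel is treated identically, so it suffices to bound the $P_{\le N}$ term.

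For the main estimate set $F=\phi_2 f$, supported in $\mathrm{supp}\,\phi_2$. For $x\in\mathrm{supp}\,\phi_1$ and $y\in\mathrm{supp}\,F$ one has $|x-y|\ge A$, while off $\mathrm{supp}\,\phi_1$ the expression vanishes; hence pointwise $|\phi_1 P_{\le N}F|\le G_A*|F|$, where $G_A(z)=|K_{\le N}(z)|\mathbf{1}_{\{|z|\ge A\}}$ is the kernel restricted to the separation region. Young's inequality then yields $\|\phi_1 P_{\le N}F\|_{L^q}\le\|G_A\|_{L^r}\|F\|_{L^p}$ with $1/r=1-1/p+1/q\in[0,1]$, which is precisely where the hypothesis $p\le q$ enters, keeping $r\ge1$.

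The computation of $\|G_A\|_{L^r}$ is the only place needing care, and it is routine. Rescaling $w=Nz$ and inserting the Schwartz bound gives, whenever $Lr>d$,
$$
\|G_A\|_{L^r}^r=N^{d(r-1)}\int_{|w|\ge NA}|\check\varphi(w)|^r\,dw\lesssim_L N^{d(r-1)}(NA)^{d-Lr},
$$
since $NA\ge A\ge1$. Choosing $L=d+m$ (so that $Lr\ge(d+m)>d$ automatically) and using $d/r=d-d/p+d/q$, this collapses to $\|G_A\|_{L^r}\lesssim_m N^{-m}A^{-m+d/q-d/p}$, exactly the claimed constant. The main, and quite modest, obstacle is just the exponent bookkeeping: verifying that the rescaling, the tail integral, and the formula for $1/r$ conspire to produce precisely $A^{-m+\frac dq-\frac dp}N^{-m}$, together with checking $r\ge1$ throughout — both consequences of $p\le q$ and $m>0$. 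No genuine analytic difficulty arises beyond the Schwartz decay of $\check\varphi$.
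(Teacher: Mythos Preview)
The paper does not actually prove this lemma; it is stated and attributed to \cite{LiZh-APDE} without argument. Your proof is correct and is exactly the standard one: represent $P_{\le N}$ as convolution against $N^d\check\varphi(N\cdot)$, exploit the spatial separation so that only the tail $|z|\ge A$ of the kernel contributes, and finish with Young's inequality plus the Schwartz decay of $\check\varphi$. The exponent bookkeeping with $L=d+m$ and $1/r=1+1/q-1/p$ is carried out correctly.

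One small remark: the ``equality'' in the lemma is not a literal identity with the paper's conventions, since $P_{\ge N}=P_{>N}+P_N$ and hence $\phi_1 P_{\ge N}(\phi_2 f)=-\phi_1 P_{\le N}(\phi_2 f)+\phi_1 P_N(\phi_2 f)$. You handled this correctly by noting that the extra band $P_N$ has a kernel of the same Schwartz type and so obeys the identical bound; the intended reading of the lemma is that both quantities satisfy the stated estimate, which your argument delivers.
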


\subsection{Space-time estimates of linear heat equation}Let $e^{t\Delta}$ denote the heat semigroup on $\mathbb{R}^d$. Then for suitable function $f$,  $e^{t\Delta}f$ solves the linear heat equation
$$
\partial_t h=\Delta h, \ \ h(0,x)=f(x),\ t>0, \ x\in\R^d,
$$
and the solution satisfies the following fundamental space-time estimates:
\begin{lem}\label{lem:Stri} Let $f\in L^p(\R^d)$ for $1\le p\le \infty$, then
\begin{align}\label{lem:Lp-bound}
\big\|e^{t\Delta}f\big\|_{L^\infty_tL^p_x(\R^+\times\R^d)}\lesssim \|f\|_{L^p(\R^d)}.
\end{align}
Moreover, let $I\subset \R^+$, then for $f\in L^2(\R^d)$ and $F\in L^\frac{2(2+d)}{d+4}_{tx}(\R^+\times\R^d)$,
\begin{align}
\big\|\nabla e^{t\Delta}f\big\|_{L^2_{tx}(\R^+\times\R^d)}&\lesssim \|f\|_{L^2(\R^d)};\label{nabla-L2}
\end{align}
\begin{align}\label{nabla-L2'}
\big\| e^{t\Delta}f\big\|_{L^{\frac{2(2+d)}{d}}_{tx}(\R^+\times\R^d)}&\lesssim \|f\|_{L^2(\R^d)};
\end{align}
\begin{align}\label{Lpq}
\Big\|\int_0^t e^{(t-s)\Delta}F(s)ds\Big\|_{L^\infty_tL^2_x\ \cap \ L^\frac{2(2+d)}{d}_{tx}\cap\ L^2_t\dot H^1_x(I\times\R^d)}\lesssim \|F\|_{L^\frac{2(2+d)}{d+4}_{tx}(\R^+\times\R^d)}.
\end{align}
\end{lem}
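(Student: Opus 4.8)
The plan is to establish the four bounds in turn, the first two by soft arguments and the last two by a $TT^\ast$ argument combined with the one‑dimensional Hardy–Littlewood–Sobolev inequality in the time variable. Throughout, write $e^{t\Delta}f=G_t\ast f$ with $G_t(x)=(4\pi t)^{-d/2}e^{-|x|^2/4t}$ the heat kernel, so that $\|G_t\|_{L^1_x}=1$ and more generally $\|G_t\|_{L^m_x}\sim t^{-\frac d2(1-\frac1m)}$ for $1\le m\le\infty$. The first of these and Young's inequality give $\|e^{t\Delta}f\|_{L^p_x}\le\|f\|_{L^p_x}$ for all $t>0$, which upon taking the supremum in $t$ is \eqref{lem:Lp-bound}; the general version yields the pointwise‑in‑time decay $\|e^{t\Delta}g\|_{L^r_x}\lesssim t^{-\frac d2(\frac1p-\frac1r)}\|g\|_{L^p_x}$ for $1\le p\le r\le\infty$, which is the only property of the semigroup used below. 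For \eqref{nabla-L2} I would pass to the Fourier side and compute, by Plancherel and Tonelli,
\[
\|\nabla e^{t\Delta}f\|_{L^2_{tx}}^2=\int_{\R^d}|\widehat f(\xi)|^2\,|\xi|^2\Big(\int_0^\infty e^{-2t|\xi|^2}\,dt\Big)d\xi=\tfrac12\|f\|_{L^2_x}^2 ,
\]
since the inner integral equals $(2|\xi|^2)^{-1}$.

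For \eqref{nabla-L2'}, set $q=\tfrac{2(2+d)}{d}$, so its conjugate is $q'=\tfrac{2(2+d)}{d+4}$ and $q\ge2$. I would argue by duality: $\|e^{t\Delta}f\|_{L^q_{tx}}$ is the supremum of $|\langle e^{t\Delta}f,g\rangle_{tx}|$ over $g$ with $\|g\|_{L^{q'}_{tx}}\le1$, and by Plancherel in $x$ and Cauchy–Schwarz in $\xi$,
\[
|\langle e^{t\Delta}f,g\rangle_{tx}|=\Big|\int_{\R^d}\widehat f(\xi)\,\overline{\textstyle\int_0^\infty e^{-t|\xi|^2}\widehat g(t,\xi)\,dt}\;d\xi\Big|\le\|f\|_{L^2_x}\Big\|\int_0^\infty e^{-t|\xi|^2}\widehat g(t,\xi)\,dt\Big\|_{L^2_\xi}.
\]
Expanding the last norm squared and undoing Plancherel rewrites it as $\int_0^\infty\!\int_0^\infty\langle e^{(t+s)\Delta}g(t),g(s)\rangle_x\,dt\,ds$; estimating the inner pairing by Hölder and the decay bound (with $p=q'$, $r=q$, which produces the weight $(t+s)^{-d/(d+2)}$) leaves
\[
\Big\|\int_0^\infty e^{-t|\xi|^2}\widehat g(t,\xi)\,dt\Big\|_{L^2_\xi}^2\lesssim\int_0^\infty\!\int_0^\infty(t+s)^{-\frac{d}{d+2}}\|g(t)\|_{L^{q'}_x}\|g(s)\|_{L^{q'}_x}\,dt\,ds\lesssim\|g\|_{L^{q'}_{tx}}^2 ,
\]
the last step being Hardy–Littlewood–Sobolev in $t$ (using $(t+s)^{-\lambda}\le|t-s|^{-\lambda}$ for $t,s>0$, the exponent identity $\tfrac2{q'}+\tfrac d{d+2}=2$, and $0<\tfrac d{d+2}<1$, $1<q'<\infty$). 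This is \eqref{nabla-L2'}.

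For \eqref{Lpq}, write $u(t)=\int_0^t e^{(t-s)\Delta}F(s)\,ds$. For the $L^q_{tx}$‑component I would use Minkowski's inequality and the decay estimate pointwise in time,
\[
\|u(t)\|_{L^q_x}\le\int_0^t\|e^{(t-s)\Delta}F(s)\|_{L^q_x}\,ds\lesssim\int_0^t(t-s)^{-\frac d{d+2}}\|F(s)\|_{L^{q'}_x}\,ds ,
\]
and then the one‑sided Hardy–Littlewood–Sobolev inequality in $t$ — the exponents again balance, $1+\tfrac1q=\tfrac1{q'}+\tfrac d{d+2}$ — to get $\|u\|_{L^q_{tx}}\lesssim\|F\|_{L^{q'}_{tx}}$. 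The remaining two components come from the energy method: $u$ solves $\partial_t u=\Delta u+F$ with $u(0)=0$, so pairing the equation with $u$ (real‑valued, since $F$ is) and integrating over $[0,T]\times\R^d$ gives
\[
\tfrac12\|u(T)\|_{L^2_x}^2+\|\nabla u\|_{L^2_{tx}([0,T]\times\R^d)}^2=\int_0^T\!\!\int_{\R^d}Fu\,dx\,dt\le\|F\|_{L^{q'}_{tx}}\|u\|_{L^q_{tx}}\lesssim\|F\|_{L^{q'}_{tx}}^2 ,
\]
using Hölder in $(t,x)$ with the conjugate pair $(q',q)$ and the $L^q_{tx}$‑bound just proved; taking the supremum over $T$ yields the $L^\infty_tL^2_x$ and $L^2_t\dot H^1_x$ parts of \eqref{Lpq}, and restricting from $\R^+$ to any $I\subset\R^+$ is automatic. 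The energy identity itself I would justify by first taking $F$ smooth and compactly supported and passing to the limit. The only step requiring genuine care is the Hardy–Littlewood–Sobolev application in \eqref{nabla-L2'} and in the $L^q_{tx}$‑part of \eqref{Lpq}: the time weight $(t\pm s)^{-d/(d+2)}$ is precisely the scaling‑critical one, so plain Hölder in $t$ fails and one must invoke Hardy–Littlewood–Sobolev together with the exponent identities above; everything else is routine heat‑kernel bookkeeping.
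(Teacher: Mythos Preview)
Your argument is correct, but it proceeds differently from the paper's. For \eqref{lem:Lp-bound} and \eqref{nabla-L2} you and the paper agree (Young's inequality and Plancherel). For \eqref{nabla-L2'} the paper simply interpolates between the two bounds already proved: pointwise in time one has the Gagliardo--Nirenberg inequality $\|e^{t\Delta}f\|_{L^{2(d+2)/d}_x}\lesssim\|e^{t\Delta}f\|_{L^2_x}^{2/(d+2)}\|\nabla e^{t\Delta}f\|_{L^2_x}^{d/(d+2)}$, and integrating in $t$ with H\"older gives \eqref{nabla-L2'} in one line. Your $TT^\ast$ + Hardy--Littlewood--Sobolev route is longer here but is the standard Strichartz template and would extend to other admissible pairs without further thought. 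For \eqref{Lpq} the paper works on the Fourier side (Plancherel in $x$, Young in $t$, then Sobolev embedding), which actually yields the more general subcritical estimate of Lemma~\ref{lem:Stri-1} at the same stroke; the mixed norm $L^{2(d+2)/d}_{tx}$ is then recovered by the same interpolation as above. You instead obtain the $L^{2(d+2)/d}_{tx}$ bound first via fractional integration in $t$, and then bootstrap to $L^\infty_tL^2_x$ and $L^2_t\dot H^1_x$ by the energy identity. Both orderings work; the paper's approach packages the inhomogeneous estimates into a single Fourier computation, while yours avoids the Fourier transform for \eqref{Lpq} but depends on having already proved the middle norm.
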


We can give some remarks on the inequalities $\eqref{lem:Lp-bound}-\eqref{Lpq}$ above as follows:

 (i). The estimate \eqref{lem:Lp-bound} is classical and  immediately follows from the Younger inequality by the following heat kernel integral:
$$(e^{t\Delta}f)(x)=( 4\pi t)^{-d/2}\int_{\R^d}e^{-|x-y|^2/4t}f(y)dy, \ t>0.$$
More generally, for all $1\le p\le q\le\infty$, the following (decay) estimates hold:
\begin{align}
\|e^{t\Delta}f\|_{L^q(\R^d)}\lesssim t^{\frac{d}{2}(\frac{1}{q}-\frac{1}{p})} \|f\|_{L^p(\R^d)}, \  \ t>0.\label{Linear-decay}
\end{align}

 (ii). The estimate \eqref{nabla-L2} is equivalent to  a kind of square-function inequality on $L^2(\R^d)$, which can be reformulated as
$$\Big\| \Big( \int_0^\infty |\sqrt{t}\nabla e^{t\Delta}f|^2\frac{dt}{t}\Big)^{\frac{1}{2}}\Big\|_{L^2(\R^d)}\lesssim \|f\|_{L^2(\R^d)},$$
which follows directly by the Plancherel's theorem, and also holds in the $L^p(\R^d)$ for $1<p<\infty$ ( see e.g. Stein\cite[p. 27-46]{Stein} ).

(iii). The estimate \eqref{nabla-L2'} can be obtained by interpolation between the \eqref{lem:Lp-bound} and \eqref{nabla-L2}:
$$
\big\|e^{t\Delta}f\big\|_{L^\frac{2(2+d)}{d}_{tx}(\R^+\times\R^d)}\lesssim \big\|e^{t\Delta}f\big\|_{L^\infty_tL^2_x(\R^+\times\R^d)}^{\frac 2{d+2}}
\big\|\nabla e^{t\Delta}f\big\|_{L^2_{tx}(\R^+\times\R^d)}^{\frac d{d+2}}.
$$

(iv). The estimate \eqref{Lpq} consists of the three same type inequalities with the different norms $L^\infty_tL^2_x$, $ L^\frac{2(2+d)}{d}_{tx}$ and  $L^2_t\dot H^1_x$ on the left side. As shown in (iii) above, the second norm $ L^\frac{2(2+d)}{d}_{tx}$ can be controlled by interpolation between $L^\infty_tL^2_x$ and $L^2_t\dot H^1_x$. Because of similarity of their  proofs, we  can give a  proof to the first one, which is the special case of the following lemma.
 It is worth to noting that when $p<\infty$, the estimate is $L^2$-subcritical.
\begin{lem}\label{lem:Stri-1} Let $2\le p\le \infty$, and the pair $(p_1,r_1)$ satisfy
$$
\frac2{p_1}+\frac d{r_1}=\frac d2+2+\frac2p,\quad 1\le p_1\le 2,\quad 1<r_1\le2,
$$
then
$$
\Big\|\int_0^t e^{(t-s)\Delta}F(s)ds\Big\|_{L^p_tL^2_x(\R^+\times\R^d)}\lesssim \|F\|_{L_t^{p_1}L_x^{r_1}(\R^+\times\R^d)}.
$$
\end{lem}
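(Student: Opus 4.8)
The plan is to reduce this space-time bound to a one-dimensional convolution inequality in the time variable, using the fixed-time $L^{r_1}_x\to L^2_x$ smoothing of the heat semigroup recorded in \eqref{Linear-decay}. Set $\sigma:=\tfrac d2\big(\tfrac1{r_1}-\tfrac12\big)$; the pair condition together with $1\le p_1\le2\le p$ forces $\sigma=1+\tfrac1p-\tfrac1{p_1}\in[0,1]$, with $\sigma=0$ exactly when $(p_1,p)=(1,\infty)$ (equivalently $r_1=2$) and $\sigma=1$ exactly when $p_1=p=2$ (equivalently $r_1=\tfrac{2d}{d+4}$, which needs $d\ge5$). By \eqref{Linear-decay} one has $\|e^{(t-s)\Delta}F(s)\|_{L^2_x}\lesssim (t-s)^{-\sigma}\|F(s)\|_{L^{r_1}_x}$, so Minkowski's integral inequality gives, pointwise in $t$, that $\big\|\int_0^t e^{(t-s)\Delta}F(s)\,ds\big\|_{L^2_x}\lesssim (k*G)(t)$, where $G(s):=\|F(s)\|_{L^{r_1}_x}$ and $k(\tau):=\tau^{-\sigma}\mathbf 1_{\{\tau>0\}}$.

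For $0<\sigma<1$ and $p<\infty$ (hence $p_1>1$), the kernel $k$ lies in the Lorentz space $L^{1/\sigma,\infty}(\R^+)$, and the weak-type (O'Neil) form of Young's convolution inequality gives $\|k*G\|_{L^p_t}\lesssim\|G\|_{L^{p_1}_t}$; the required exponent relation $1+\tfrac1p=\sigma+\tfrac1{p_1}$ needs no checking, being identical to the identity $\sigma=1+\tfrac1p-\tfrac1{p_1}$ above, which in turn is just the pair condition. When $\sigma=0$, i.e.\ $r_1=2$ and $p=\infty$, $k$ is merely an indicator and the bound is immediate from $\|\int_0^t e^{(t-s)\Delta}F(s)\,ds\|_{L^2_x}\le\int_0^t\|F(s)\|_{L^2_x}\,ds$; this handles the endpoint $(p_1,r_1,p)=(1,2,\infty)$.

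The two borderline regimes $p=\infty$ (with $\sigma<1$) and $\sigma=1$ need the genuine parabolic gain rather than the lossy Minkowski step, and this is where I expect the real difficulty to lie. For $p=\infty$ I would run a $TT^*$ argument: expanding
\[
\Big\|\int_0^t e^{(t-s)\Delta}F(s)\,ds\Big\|_{L^2_x}^2=\int_0^t\!\!\int_0^t\big\langle e^{(2t-s-s')\Delta}F(s),\,F(s')\big\rangle\,ds\,ds'
\]
and bounding each factor by \eqref{Linear-decay} through $L^{r_1}_x\to L^{r_1'}_x$ (whose decay rate is exactly $2\sigma$) produces the homogeneous bilinear kernel $(a+b)^{-2\sigma}$ in the variables $a=t-s$, $b=t-s'$; the Hardy--Littlewood--P\'olya inequality for homogeneous kernels, applied with the Lebesgue exponent $\tfrac1{p_1}=1-\sigma$ on both factors, then bounds this by $\|G\|_{L^{p_1}_t}^2$ uniformly in $t$, i.e.\ in $L^\infty_tL^2_x$. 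The case $\sigma=1$, i.e.\ $(p_1,r_1,p)=(2,\tfrac{2d}{d+4},2)$ with $d\ge5$, I would instead derive from maximal $L^2$-regularity for the heat equation, $\big\|\Delta\!\int_0^t e^{(t-s)\Delta}F(s)\,ds\big\|_{L^2_{tx}}\lesssim\|F\|_{L^2_{tx}}$, combined with the Sobolev embedding $\dot H^2_x\hookrightarrow L^{2d/(d-4)}_x$ and a duality argument exploiting the time-reversal symmetry of the Duhamel operator. I expect the main obstacle to be precisely this endpoint bookkeeping: the naive Minkowski estimate discards the smoothing exactly when $\sigma=1$ or $p=\infty$, and the line $p_1=1$ with $r_1<2$ is genuinely critical (it should be reached only through the trivial case $r_1=2$, not via the weak Young inequality); away from these borderline configurations the interior of the range is a routine combination of heat-kernel decay and fractional integration in time.
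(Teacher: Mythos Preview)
Your physical-space route via \eqref{Linear-decay} and weak Young covers the open range $1<p_1\le 2\le p<\infty$, and your $TT^*$ and maximal-regularity add-ons handle $p=\infty$ and $\sigma=1$ respectively. However, your parenthetical ``hence $p_1>1$'' is false, and this is a genuine gap: the lemma's hypotheses allow $p_1=1$ with $r_1<2$ (equivalently $2\le p<\infty$ and $\sigma=1/p\in(0,\tfrac12]$), and there your Minkowski step leaves you with the estimate $\big\|\tau^{-\sigma}\mathbf 1_{\{\tau>0\}}*G\big\|_{L^p_t}\lesssim\|G\|_{L^1_t}$, which is false because $\tau^{-\sigma}\notin L^{1/\sigma}$. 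Your remark that this edge ``should be reached only through the trivial case $r_1=2$'' is not right---the lemma does hold there, and nothing in your scheme proves it.

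The paper's proof dissolves all of these endpoint issues at once by passing to the Fourier side first. For each fixed $\xi$, the time kernel $t\mapsto e^{-t|\xi|^2}\mathbf 1_{\{t>0\}}$ lies in the \emph{strong} Lebesgue space $L^{1/\sigma}(\R^+)$ (not merely weak $L^{1/\sigma}$), with norm $c_\sigma|\xi|^{-2\sigma}$; the ordinary Young inequality then gives
\[
\Big\|\int_0^t e^{-(t-s)|\xi|^2}\widehat F(\xi,s)\,ds\Big\|_{L^p_t}\lesssim |\xi|^{-2\sigma}\,\|\widehat F(\xi,\cdot)\|_{L^{p_1}_t}
\]
uniformly across the whole closed range $1\le p_1\le p\le\infty$, including $p_1=1$ and $p=\infty$. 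Two applications of Minkowski (using $p_1\le 2\le p$), Plancherel, and the Hardy--Littlewood--Sobolev inequality $\||\nabla|^{-2\sigma}F\|_{L^2_x}\lesssim\|F\|_{L^{r_1}_x}$ (valid for $1<r_1\le 2$) finish the proof in three lines. The point you are missing is that the exponential heat multiplier, unlike its physical-space counterpart $(t-s)^{-\sigma}$, is an honest $L^q$ function in time, so no weak-type convolution inequality---and hence no separate endpoint analysis---is needed.
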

\begin{proof}
By Plancherel's theorem, it is equivalent that
 \begin{align}\label{lpq'}\Big\|\int_0^t e^{-(t-s)|\xi|^2}\widehat{F}(\xi, s)ds \Big\|_{L^p_tL^2_\xi(\R^+\times\R^d)}\lesssim \|F\|_{L_t^{p_1}L_x^{r_1}(\R^+\times\R^d)}.
 \end{align}
Since  by the Young inequality of the convolution on $\R^+$, for any $1\le p_1\le p\le \infty$,
$$\Big\|\int_0^t e^{-(t-s)|\xi|^2}\widehat{F}(\xi, s)ds\Big\|_{L^p(\R^+)}\lesssim \Big\||\xi|^{-(\frac2p+\frac2{p_1'})}\widehat{F}(\xi,\cdot)\Big\|_{L_t^{p_1}(\R^+)}.$$
Note that $p_1\le 2\le p$, thus by Minkowski's inequality,  Plancherel's theorem, Sobolev's embedding we obtain
 \begin{align*}
 \Big\|\int_0^t e^{-(t-s)|\xi|^2}\widehat{F}(\xi, s)ds \Big\|_{L^p_tL^2_\xi(\R^+\times\R^d)}&\lesssim \Big\||\xi|^{-(\frac2p+\frac2{p_1'})}\widehat{F}(\xi,\cdot)\Big\|_{L^2_\xi L_t^{p_1}(\R^+\times\R^d)}\\ &\lesssim \Big\||\nabla|^{-(\frac2p+\frac2{p_1'})}F\Big\|_{L_t^{p_1}L^2_x(\R^+\times\R^d)}\lesssim\|F\|_{L_t^{p_1}L_x^{r_1}(\R^+\times\R^d)}.
\end{align*}
which gives the desired estimate \eqref{lpq'}.
\end{proof}

Finally, we also need the following maximal $L^p$-regularity result for the heat flow. See Lemarie-Rieusset's book \cite[P.64]{LR-Book} for example.

\begin{lem}\label{lem:max-Lp} Let $p\in (1,\infty),q\in (1,\infty)$, and let $T\in (0,\infty]$, then the operator $A$ defined by $$f(t,x)\mapsto \int_0^t e^{(t-s)\Delta}\Delta f(s,\cdot)\,ds$$ is bounded from $L^p((0,T),L^q(\R^d))$ to $L^p((0,T),L^q(\R^d))$.
\end{lem}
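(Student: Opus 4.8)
The plan is to prove the estimate for $T=\infty$; the case $T<\infty$ then follows by extending $f\in L^p((0,T),L^q(\R^d))$ by zero to $t>T$ and restricting $Af$ back to $(0,T)$, since $A$ is causal. Regarding $t\mapsto f(t,\cdot)$ as an $L^q(\R^d)$-valued function on $\R$ (put equal to $0$ for $t<0$) and using that $\Delta$ commutes with the semigroup, the operator $A$ becomes the operator-valued convolution $Af(t)=\int_\R k(t-s)f(s)\,ds$ with $\mathcal L(L^q(\R^d))$-valued kernel $k(t)=\mathbf 1_{\{t>0\}}\,\Delta e^{t\Delta}$. Integrating by parts in $t$ and using that $e^{t\Delta}\to 0$ strongly on $L^q(\R^d)$ as $t\to\infty$ (valid for $1<q<\infty$ by the decay estimate \eqref{Linear-decay} and density), one finds the Fourier symbol $m(\tau):=\widehat k(\tau)=\int_0^\infty e^{-it\tau}\Delta e^{t\Delta}\,dt=\Delta(i\tau-\Delta)^{-1}=-I+i\tau(i\tau-\Delta)^{-1}$, so that $\tau m'(\tau)=-i\tau\,\Delta(i\tau-\Delta)^{-2}$; both are bounded operators on $L^q(\R^d)$ for every $\tau\neq0$.

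Since $L^q(\R^d)$ is a UMD space for $1<q<\infty$, the operator-valued Mikhlin multiplier theorem of Weis reduces the boundedness of $A$ on $L^p(\R;L^q(\R^d))$ to showing that the two families $\{m(\tau):\tau\in\R\setminus\{0\}\}$ and $\{\tau m'(\tau):\tau\in\R\setminus\{0\}\}$ are $\mathcal R$-bounded in $\mathcal L(L^q(\R^d))$. Writing $z=i\tau$, both families are built from $z(z-\Delta)^{-1}$ and $z^2(z-\Delta)^{-2}=(z(z-\Delta)^{-1})^2$ with $\arg z=\pm\pi/2$, so it suffices to establish the $\mathcal R$-sectoriality of $-\Delta$ on $L^q(\R^d)$ of angle strictly less than $\pi/2$. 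For this one starts from the Gaussian kernel bound for the complexified heat semigroup: for $z$ in a sector $|\arg z|<\theta_0<\pi/2$ the kernel of $e^{z\Delta}$ is an $L^1$-normalized Gaussian at scale $\sqrt{|z|}$ with constants depending only on $\theta_0$, whence the pointwise domination $|e^{z\Delta}f|\lesssim_{\theta_0}\mathcal Mf$ by the Hardy--Littlewood maximal function, uniformly in $z$. A family of operators on $L^q(\R^d)$ dominated pointwise by one fixed bounded operator is automatically $\mathcal R$-bounded (via the Khintchine--Kahane square-function description of $\mathcal R$-bounds in $L^q$), so $\{e^{z\Delta}:|\arg z|<\theta_0\}$ is $\mathcal R$-bounded; passing from $\{e^{z\Delta}\}$ to $\{z(z-\Delta)^{-1}\}$ through the Laplace-transform representation of the resolvent, together with the stability of $\mathcal R$-bounds under integral averages and products, shows that $-\Delta$ is $\mathcal R$-sectorial on $L^q(\R^d)$ of any positive angle. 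Weis's theorem then yields $\|Af\|_{L^p_tL^q_x}\lesssim\|f\|_{L^p_tL^q_x}$.

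The step I expect to be the real obstacle is precisely this verification of $\mathcal R$-boundedness --- equivalently, of $\mathcal R$-sectoriality of the Laplacian on $L^q(\R^d)$ --- which is where the hypotheses $1<p,q<\infty$ and the Gaussian bounds genuinely enter; the symbol computation and the reduction to $T=\infty$ are routine. In the diagonal case $p=q$ one may bypass operator-valued multipliers entirely: there $A$ is, up to a constant, convolution on $\R^{d+1}$ with the kernel $\Delta\Gamma(t,x)$, $\Gamma(t,x)=(4\pi t)^{-d/2}e^{-|x|^2/4t}\mathbf 1_{\{t>0\}}$, which is homogeneous of the critical degree $-(d+2)$ for the parabolic dilations $(t,x)\mapsto(\lambda^2t,\lambda x)$ and satisfies the Hörmander cancellation conditions with respect to the parabolic metric, hence is a Calderón--Zygmund kernel on $\R^{d+1}$; classical Calderón--Zygmund theory then gives $L^p(\R^{d+1})$-boundedness for $1<p<\infty$. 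The genuinely mixed-norm case $p\neq q$, however, seems to require the vector-valued argument above.
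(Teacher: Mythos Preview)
The paper does not supply its own proof of this lemma; it is quoted as a known result with a reference to Lemari\'e-Rieusset's book \cite[p.~64]{LR-Book}. Your outline is correct and is the modern operator-theoretic route to maximal $L^p$-regularity due to Weis: the symbol identification $m(\tau)=-I+i\tau(i\tau-\Delta)^{-1}$, the UMD property of $L^q(\R^d)$ for $1<q<\infty$, and the $\mathcal R$-sectoriality of $-\Delta$ obtained from Gaussian domination by the Hardy--Littlewood maximal function all fit together exactly as you describe. The one place to be slightly careful is the Fourier-transform computation of $m(\tau)$, since $e^{t\Delta}$ does not decay in operator norm and $\mathrm{Re}(i\tau)=0$; the standard fix (limiting from $\tau-i\epsilon$, or working on a dense class and using the resolvent directly) is routine but worth a line.

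For comparison, the treatment in the cited reference is closer in spirit to the diagonal argument you sketch at the end: one regards $\mathbf 1_{\{t>0\}}\Delta\Gamma(t,x)$ as a Calder\'on--Zygmund kernel for the parabolic metric on $\R^{1+d}$ and obtains the mixed-norm bounds via vector-valued singular-integral theory (the operator acts as an $L^q(\R^d)$-valued Calder\'on--Zygmund operator in $t$), using the explicit heat kernel rather than abstract $\mathcal R$-boundedness. Your approach is more general --- it transfers verbatim to any $\mathcal R$-sectorial generator on a UMD space --- while the book's argument is more self-contained for the particular case of $\Delta$ on $\R^d$ and avoids the operator-valued Mikhlin machinery.
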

\section{Proof of Theorem \ref{thm:main1}}

In this section, we will divide several subsection to finish the proof of  Theorem \ref{thm:main1}. For the end, we first establish a supercritical estimate on the linear heat flow in the following subsection.
\subsection{A supercritical estimate on the linear heat flow} Let us recall the following radial Sobolev embedding, see \cite{TaViZh} for example.
\begin{lem}\label{lem:radial-Sob}
Let $\alpha,q,p,s$ be the parameters which satisfy
$$
\alpha>-\frac dq;\quad \frac1q\le \frac1p\le \frac1q+s;\quad 1\le p,q\le \infty; \quad 0<s<d
$$
with
$$
\alpha+s=d(\frac1p-\frac1q).
$$
Moreover, let at most one of the following equalities hold:
$$
p=1,\quad p=\infty,\quad q=1,\quad q=\infty,\quad \frac1p=\frac1q+s.
$$
Then the radial Sobolev embedding inequality holds:
\begin{align*}
\big\||x|^\alpha u\big\|_{L^q(\R^d)}\lesssim \big\||\nabla|^su\big\|_{L^p(\R^d)},
\end{align*}

\end{lem}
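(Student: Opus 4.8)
\emph{Sketch of a proof.} The plan is to recast the inequality in terms of the Riesz potential and then split according to the sign of $\alpha$. Put $f=|\nabla|^s u$, so that $u=|\nabla|^{-s}f=c_{d,s}\,|\cdot|^{-(d-s)}*f$, which is still radial whenever $f$ is; the claim becomes the weighted bound $\big\||x|^\alpha\,(|\cdot|^{-(d-s)}*f)\big\|_{L^q}\lesssim\|f\|_{L^p}$, whose natural homogeneity is exactly the relation $\alpha+s=d(\tfrac1p-\tfrac1q)$.

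For $\alpha\le 0$ this is a special case of the Stein--Weiss inequality for the Riesz potential and uses no radial symmetry: its admissibility conditions are precisely those assumed here, namely $-\alpha<\tfrac dq$ (that is, ``$\alpha>-\tfrac dq$''), $\tfrac1q\le\tfrac1p$, the homogeneity relation, and the requirement that one stays away from the degenerate endpoint exponents (which is what the ``at most one equality'' clause guarantees, the borderline configurations $p=1$, $q=\infty$, $\tfrac1p=\tfrac1q+s$ being handled, as usual, by weak-type/Lorentz refinements). The sub-case $\alpha=0$ is just the Hardy--Littlewood--Sobolev/Sobolev embedding $\||\nabla|^{-s}f\|_{L^q}\lesssim\|f\|_{L^p}$ with $\tfrac1q=\tfrac1p-\tfrac sd$.

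The substantive regime is $\alpha>0$, i.e.\ $\tfrac1q<\tfrac1p-\tfrac sd$, where the weight grows and radiality is indispensable. I would first establish the endpoint $q=\infty$ version, $\big\||x|^{d/p-s}u\big\|_{L^\infty}\lesssim\big\||\nabla|^s u\big\|_{L^p}$, by a Littlewood--Paley decomposition $u=\sum_N P_N u$ (each $P_N u$ radial), using the pointwise decay estimate for a radial function $g$ with Fourier support in $\{|\xi|\sim N\}$,
\[
|g(x)|\;\lesssim\;\langle N|x|\rangle^{-\frac{d-1}{p}}\,N^{d/p}\,\|g\|_{L^p}.
\]
This follows from $|g(r)|^p\le p\int_r^\infty|g(\rho)|^{p-1}|\partial_\rho g(\rho)|\,d\rho$ after inserting $\rho^{d-1}\rho^{-(d-1)}$ and bounding $\rho^{-(d-1)}\le r^{-(d-1)}$ on $[r,\infty)$, then H\"older in $\R^d$, together with the Bernstein bounds $\|\nabla P_N u\|_{L^p}\sim N\|P_N u\|_{L^p}$ and $\|P_N u\|_{L^\infty}\lesssim N^{d/p}\|P_N u\|_{L^p}$. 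Combining with $\|P_N u\|_{L^p}\lesssim N^{-s}\||\nabla|^s u\|_{L^p}$ (the Littlewood--Paley multiplier relating the two being a uniformly bounded convolver) and summing, the series $\sum_N\langle N|x|\rangle^{-(d-1)/p}N^{d/p-s}$ splits into two geometric series over $N|x|\lesssim1$ and $N|x|\gtrsim1$; both converge and reconstruct the weight $|x|^{-(d/p-s)}$, precisely because in the case $q=\infty$ the hypotheses force $\tfrac1p<s<\tfrac dp$ strictly. Finally, for $0<\alpha<\tfrac dp-s$ I would interpolate this $L^\infty$ bound against the $\alpha=0$ Sobolev bound by a pointwise H\"older inequality: with $\alpha=\theta(\tfrac dp-s)$, $\tfrac1q=\tfrac{1-\theta}{q_0}$ and $\tfrac1{q_0}=\tfrac1p-\tfrac sd$,
\[
\big\||x|^\alpha u\big\|_{L^q}\;\le\;\big\||x|^{d/p-s}u\big\|_{L^\infty}^{\,\theta}\,\|u\|_{L^{q_0}}^{\,1-\theta},
\]
and the homogeneity relation $\alpha+s=d(\tfrac1p-\tfrac1q)$ is automatically compatible.

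The main obstacle is the $\alpha>0$ endpoint, and within it the pointwise radial decay estimate together with the check that both dyadic sums in $N$ converge; the remaining ingredients (Stein--Weiss, Hardy--Littlewood--Sobolev, pointwise interpolation) are off the shelf. A secondary nuisance is the admissible extreme exponents: at $p=1$ or $q=\infty$ the unweighted Sobolev embedding degenerates to weak type, so one either quotes Lorentz-space versions or performs the interpolation slightly inside the range, which is exactly why at most one of the listed equalities may hold. If one prefers not to cite Stein--Weiss as a black box, the whole lemma can instead be proved by running the Littlewood--Paley argument uniformly: decompose simultaneously into dyadic frequency shells and dyadic spatial annuli $|x|\sim R$, bound each block by the better of the pointwise estimate above and Bernstein's inequality, and sum the resulting two-parameter series by Schur's test, the needed exponent gaps again coming from the strict inequalities in the hypotheses.
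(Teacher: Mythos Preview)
The paper does not actually prove this lemma: it is stated with the sentence ``Let us recall the following radial Sobolev embedding, see \cite{TaViZh} for example,'' and no argument is given. So there is no proof in the paper to compare against; your sketch supplies strictly more than the paper does.

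For what it is worth, your outline is correct and is essentially the standard route (and close to what one finds in Tao--Visan--Zhang). The $\alpha\le 0$ range is exactly Stein--Weiss for the Riesz potential and needs no radiality; the genuinely radial input is the $q=\infty$ endpoint, where your Ni--Strauss type pointwise bound $|P_N u(x)|\lesssim \langle N|x|\rangle^{-(d-1)/p}N^{d/p}\|P_N u\|_{L^p}$ together with $\|P_N u\|_{L^p}\lesssim N^{-s}\||\nabla|^s u\|_{L^p}$ leads to the two geometric sums over $N|x|\lesssim 1$ and $N|x|\gtrsim 1$, whose convergence is precisely the pair of strict inequalities $\tfrac1p<s<\tfrac dp$ forced by the hypotheses when $q=\infty$; and the intermediate $\alpha>0$ cases follow by pointwise H\"older interpolation against the $\alpha=0$ Sobolev embedding, the scaling relation $\alpha+s=d(\tfrac1p-\tfrac1q)$ being automatically preserved. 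Your remark that the dyadic multiplier $P_N|\nabla|^{-s}$ is an $L^1$-convolver (so the estimate $\|P_N u\|_{L^p}\lesssim N^{-s}\||\nabla|^s u\|_{L^p}$ holds even at $p=1,\infty$) is also correct, and the ``at most one equality'' clause is exactly what keeps you away from the degenerate endpoints of Stein--Weiss and Hardy--Littlewood--Sobolev.
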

\begin{lem}\label{prop:superest} For any $q>2$ and any $\gamma\in \big(\frac12-\frac3q,1-\frac4q\big)$, suppose that the radial function $f\in H^\gamma(\R^d)$ satisfying
$$
\mbox{supp }f\subset \{x:|x|\ge1\},
$$
then
$$
\big\|e^{t\Delta}f\big\|_{L^q_{tx}(\R^+\times\R^d)}\lesssim \big\||\nabla|^\gamma f\big\|_{L^2_{x}(\R^d)}.
$$
\end{lem}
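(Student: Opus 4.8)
Throughout, write $s_c=\tfrac d2-\tfrac{d+2}{q}$, and note that the hypothesis interval is nonempty precisely because $q>2$. The plan is to cut $e^{t\Delta}f$ with a smooth radial bump near the origin into a piece supported away from the origin — handled by the radial Sobolev embedding of Lemma~\ref{lem:radial-Sob} — and a piece supported near the origin — which is small (Gaussian-small when $t\le1$) precisely because $f$ is supported away from the origin. I would fix radial cutoffs $\chi\in C_c^\infty(\R^d)$ with $\chi\equiv1$ on $\{|x|\le\tfrac14\}$ and $\operatorname{supp}\chi\subset\{|x|\le\tfrac12\}$, and $\tilde\chi$ with $\tilde\chi\equiv1$ on $\{|x|\le\tfrac34\}$ and $\operatorname{supp}\tilde\chi\subset\{|x|\le1\}$, so that $f=(1-\tilde\chi)f$ and $\operatorname{dist}(\operatorname{supp}\chi,\operatorname{supp}(1-\tilde\chi))\ge\tfrac14$, and then split $e^{t\Delta}f=(1-\chi)e^{t\Delta}f+\chi\,e^{t\Delta}f$, recalling that $e^{t\Delta}f$ is radial and smooth for $t>0$.

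\textbf{Far piece.} Since $(1-\chi)e^{t\Delta}f$ is radial and supported in $\{|x|\ge\tfrac14\}$, we have $|(1-\chi)e^{t\Delta}f|\lesssim|x|^{\alpha}|e^{t\Delta}f|$ for any $\alpha\ge0$. Take $\alpha=s_c-\gamma$ and $s=\gamma+\tfrac2q$; then $\alpha\ge0$ because $\gamma<1-\tfrac4q\le s_c$ (the last inequality being $(d-2)(\tfrac12-\tfrac1q)\ge0$), and a direct check shows $(\alpha,q,2,s)$ verifies all hypotheses of Lemma~\ref{lem:radial-Sob} under $\tfrac12-\tfrac3q<\gamma<1-\tfrac4q$, $q>2$ — the binding condition $\tfrac1p\le\tfrac1q+s$ is exactly $\gamma\ge\tfrac12-\tfrac3q$, while $0<s<d$ and $\alpha>-\tfrac dq$ are automatic and all inequalities are strict, so no endpoint conflict arises. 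Thus for each $t>0$, $\|(1-\chi)e^{t\Delta}f(t)\|_{L^q_x}\lesssim\||\nabla|^{\gamma+2/q}e^{t\Delta}f(t)\|_{L^2_x}$, and taking $L^q_t(\R^+)$, using Plancherel with Minkowski's inequality ($q\ge2$) and $\|e^{-t|\xi|^2}\|_{L^q_t(\R^+)}\sim|\xi|^{-2/q}$,
$$\big\||\nabla|^{\gamma+\frac2q}e^{t\Delta}f\big\|_{L^q_tL^2_x}=\Big\|\,|\xi|^{\gamma+\frac2q}\big\|e^{-t|\xi|^2}\big\|_{L^q_t(\R^+)}\,\widehat f(\xi)\Big\|_{L^2_\xi}\lesssim\big\||\nabla|^{\gamma}f\big\|_{L^2};$$
equivalently, this is the $|\nabla|^{\gamma}$-twist of the $L^q_tL^2_x$ interpolant between \eqref{lem:Lp-bound} (with $p=2$) and \eqref{nabla-L2}.

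\textbf{Near piece.} Using $f=(1-\tilde\chi)f$, write $\chi e^{t\Delta}f(x)=\langle f,\,G_t(x,\cdot)\rangle$, where $G_t(x,y)=\chi(x)(4\pi t)^{-d/2}e^{-|x-y|^2/4t}(1-\tilde\chi(y))$ is, for fixed $x,t$, a Schwartz function of $y$ supported in $\{|x-y|\ge\tfrac14\}$. Since $\gamma<\tfrac d2$ we have $|\nabla|^{-\gamma}_yG_t(x,\cdot)\in L^2_y$ and $\chi e^{t\Delta}f(x)=\langle|\nabla|^\gamma f,\,|\nabla|^{-\gamma}_yG_t(x,\cdot)\rangle$, so
$$\big\|\chi e^{t\Delta}f\big\|_{L^q_tL^q_x}\le\big\||\nabla|^\gamma f\big\|_{L^2}\cdot\Big\|\,\big\||\nabla|^{-\gamma}_yG_t(x,\cdot)\big\|_{L^2_y}\,\Big\|_{L^q_t(\R^+)L^q_x(\{|x|\le1/2\})}.$$
To bound the last factor I would use Hardy--Littlewood--Sobolev when $\gamma\in(0,\tfrac d2)$ (giving $\lesssim\|G_t(x,\cdot)\|_{L^r_y}$ with $\tfrac1r=\tfrac12+\tfrac\gamma d$) and interpolation between $\|G_t(x,\cdot)\|_{L^2_y}$ and $\|\nabla_yG_t(x,\cdot)\|_{L^2_y}$ when $\gamma\in(-1,0]$ (here $|\gamma|<1$ since $q>2$); a direct evaluation of the off-diagonal-truncated Gaussian integral (with $|x-y|\ge\tfrac14$ and $|x|\le\tfrac12$ on the support) then yields, for some $c>0$,
$$\big\||\nabla|^{-\gamma}_yG_t(x,\cdot)\big\|_{L^2_y}\lesssim 1_{\{|x|\le1/2\}}\Big(t^{\frac\gamma2-\frac d4}e^{-c/t}\,1_{\{t\le1\}}+t^{\frac{\max(\gamma,0)}2-\frac d4}\,1_{\{t\ge1\}}\Big).$$
Taking $L^q_x$ over the fixed ball costs only a constant; the $L^q_t$-integral over $(0,1]$ converges thanks to $e^{-c/t}$, and over $[1,\infty)$ it converges because the exponent obeys $q\big(\tfrac{\max(\gamma,0)}2-\tfrac d4\big)<-1$: for $\gamma>0$ this reads $\gamma<\tfrac d2-\tfrac2q$, which is implied by $\gamma<1-\tfrac4q$ together with $d\ge2$, while for $\gamma\le0$ it only requires $q>\tfrac4d$, true since $q>2\ge\tfrac4d$.

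Adding the two pieces gives $\|e^{t\Delta}f\|_{L^q_{tx}(\R^+\times\R^d)}\lesssim\||\nabla|^\gamma f\|_{L^2}$. I expect the main obstacle to be the near piece: extracting the scaling $t^{\gamma/2-d/4}$ from the off-diagonal-truncated heat kernel after applying $|\nabla|^{-\gamma}$ (a genuine fractional derivative when $\gamma<0$) — essentially a Gaussian-kernel refinement of the mismatch estimate in Lemma~\ref{lem:mismatch} — and then checking that its $t\to\infty$ tail lies in $L^q_t$, for which the hypothesis $\gamma<1-\tfrac4q$ (with $d\ge2$) comfortably suffices; the far piece is routine once the radial Sobolev exponents $(\alpha,s)$ have been identified. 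One should also keep track of the elementary facts that every cutoff here is radial, that $e^{t\Delta}f$ is well defined and smooth for $t>0$, and that $f\in H^\gamma$ supported away from $0$ is legitimately paired against the Schwartz kernels $G_t(x,\cdot)$.
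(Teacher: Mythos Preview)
Your argument is correct, but it takes a genuinely different and more laborious route than the paper's. The paper avoids the spatial splitting of $e^{t\Delta}f$ altogether: it applies the radial Sobolev embedding (Lemma~\ref{lem:radial-Sob}) directly to $f$ --- which \emph{is} supported in $\{|x|\ge1\}$ --- to obtain $\|f\|_{L^\infty}\lesssim\||\nabla|^s f\|_{L^2}$ for any $s\in(\tfrac12,1)$, and then feeds this into $\|e^{t\Delta}f\|_{L^\infty_{tx}}\lesssim\|f\|_{L^\infty}$ to get the endpoint $\|e^{t\Delta}f\|_{L^\infty_{tx}}\lesssim\||\nabla|^s f\|_{L^2}$. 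Interpolating this against \eqref{nabla-L2} (read as $e^{t\Delta}:\dot H^{-1}\to L^2_{tx}$) yields $\|e^{t\Delta}f\|_{L^q_{tx}}\lesssim\||\nabla|^\gamma f\|_{L^2}$ with $\gamma=(1-\tfrac2q)s-\tfrac2q$, which sweeps out exactly $(\tfrac12-\tfrac3q,\,1-\tfrac4q)$ as $s$ runs over $(\tfrac12,1)$. Your approach instead applies radial Sobolev to $e^{t\Delta}f(t,\cdot)$ --- which is \emph{not} supported away from the origin --- forcing the cutoff $\chi$ and hence the separate near-piece analysis. That near piece is where all your effort goes: the off-diagonal Gaussian bookkeeping on $|\nabla|^{-\gamma}_yG_t(x,\cdot)$ and the $L^q_t$ tail check are essentially reproving by hand the smoothing that the paper extracts in one stroke from the $L^\infty_{tx}$ endpoint plus interpolation. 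The upside of your route is that it is fully explicit and sidesteps any interpolation-theoretic subtlety about restricting to the support-constrained class of admissible $f$; the paper's version is much shorter but leans on that interpolation being routine.
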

\begin{proof} By Lemma \ref{lem:Lp-bound}, we have
$$
\big\|e^{t\Delta}f\big\|_{L^\infty_{tx}(\R^+\times\R^d)}\lesssim \|f\|_{L^\infty(\R^d)}.
$$
Let $\alpha = \frac d2-s>0$ and $s\in (\frac12,1)$, then by Lemma \ref{lem:radial-Sob} we have
$$
\|f\|_{L^\infty(\R^d)}\lesssim\||x|^\alpha f\|_{L^\infty(\R^d)}\lesssim \big\||\nabla|^{s}f\big\|_{L^2(\R^d)},
$$
where the first inequality above has used the condition $\mbox{supp }f\subset \{x:|x|\ge1\}$.
Thus we get that
\begin{align}
\big\|e^{t\Delta}f\big\|_{L^\infty_{tx}(\R^+\times\R^d)}\lesssim \big\||\nabla|^{s}f\big\|_{L^2(\R^d)}.\label{12.44}
\end{align}
Interpolation between this last estimate and \eqref{nabla-L2}, gives our desired estimates.
\end{proof}

\subsection{Local theory and global criterion } We use $\chi_{\le a}$ for $a\in \R^+$ to denote the smooth function
\begin{align*}
\chi_{\le a}(x)=\left\{ \aligned
1, \ & |x|\le a,\\
0,    \ &|x|\ge \frac{11}{10} a,
\endaligned
  \right.
\end{align*}
and set $\chi_{\ge a}=1-\chi_{\le a}$.

Now write
\begin{align}\label{initial-decp}
h_0=v_0+w_0,
\end{align}
where
$$
v_0=\chi_{\ge \frac12}\big(P_{\ge N}h_0\big),\quad w_0=h_0-v_0.
$$
Then we will first claim that $w_0\in L^2(\R^d)$, and
\begin{align}
\|w_0\|_{L^2(\R^d)}\lesssim N^{\varepsilon_0}\big\|h_0\big\|_{\dot H^{-\varepsilon_0}(\R^d)}.\label{eq:21.43}
\end{align}
Note that $w_0=\chi_{\le \frac12}\big(P_{\ge N}h_0\big)+P_{< N}h_0$.
Firstly, we give the following estimate on the first part, which is a consequence of Lemma \ref{lem:mismatch}.
\begin{lem}\label{lem:h-mis}
Let $h_0$ be the function satisfying the hypothesis in Theorem \ref{thm:main1}, then
\begin{align}
\big\|\chi_{\le \frac12}\big(P_{\ge N}h_0\big)\big\|_{L^2(\R^d)}\lesssim
N^{-1}\big\|h_0\big\|_{\dot H^{-\varepsilon_0}(\R^d)}.
\end{align}
\end{lem}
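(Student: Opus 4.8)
The plan is to argue by duality, the point being that the hypothesis $h_0\in\dot H^{-\varepsilon_0}$ (and not $L^2$) forbids plugging $h_0$ directly into the mismatch estimate, Lemma~\ref{lem:mismatch}. Fix once and for all a smooth function $\phi$ with $|\phi|\le1$, $\phi\equiv1$ on $\{|x|\ge1\}$ and $\operatorname{supp}\phi\subset\{|x|\ge\tfrac34\}$; since $\operatorname{supp}h_0\subset\{|x|\ge1\}$ we have $h_0=\phi h_0$, and moreover $\chi_{\le\frac12}$ and $\phi$, as well as $\chi_{\le\frac12}$ and $\nabla\phi$, are supported in sets lying at a fixed positive distance $A\ge\tfrac15$ apart. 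For $g\in C_c^\infty(\R^d)$, using that multiplication by $\chi_{\le\frac12}$ and by $\phi$ and the Fourier multiplier $P_{\ge N}$ are all self-adjoint (and real), together with $h_0=\phi h_0$, one obtains
\begin{align*}
\big\langle\chi_{\le\frac12}\big(P_{\ge N}h_0\big),\,g\big\rangle=\big\langle h_0,\ \phi\,P_{\ge N}\big(\chi_{\le\frac12}g\big)\big\rangle ;
\end{align*}
all pairings are legitimate because $P_{\ge N}$ and multiplication by $\phi,\chi_{\le\frac12}$ preserve $\mathscr{S}(\R^d)$, while $h_0\in\dot H^{-\varepsilon_0}\subset\mathscr{S}'(\R^d)$. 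Hence, by the duality $(\dot H^{\varepsilon_0})'=\dot H^{-\varepsilon_0}$ and density of $C_c^\infty$ in $L^2$, it suffices to prove
\begin{align*}
\big\|\phi\,P_{\ge N}\big(\chi_{\le\frac12}g\big)\big\|_{\dot H^{\varepsilon_0}(\R^d)}\lesssim N^{-1}\|g\|_{L^2(\R^d)} .
\end{align*}

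Next I would exploit the separation of supports. Writing $P_{\ge N}=I-P_{<N}$ and using $\phi\,\chi_{\le\frac12}\equiv0$,
\begin{align*}
\phi\,P_{\ge N}\big(\chi_{\le\frac12}g\big)=-\,\phi\,P_{<N}\big(\chi_{\le\frac12}g\big),
\end{align*}
where $P_{<N}$ is again a low-frequency projection of type $P_{\le N'}$ with $N'\sim N$, so that Lemma~\ref{lem:mismatch} applies to it. Since $0\le\varepsilon_0<1$, interpolation gives
\begin{align*}
\big\|\phi\,P_{<N}(\chi_{\le\frac12}g)\big\|_{\dot H^{\varepsilon_0}}\le\big\|\phi\,P_{<N}(\chi_{\le\frac12}g)\big\|_{L^2}^{\,1-\varepsilon_0}\big\|\nabla\big(\phi\,P_{<N}(\chi_{\le\frac12}g)\big)\big\|_{L^2}^{\,\varepsilon_0},
\end{align*}
and the first factor is controlled by Lemma~\ref{lem:mismatch} (with $\phi_1=\phi$, $\phi_2=\chi_{\le\frac12}$, $p=q=2$ and $m$ arbitrary, and using $\|\chi_{\le\frac12}g\|_{L^2}\le\|g\|_{L^2}$) by $\lesssim_m N^{-m}\|g\|_{L^2}$.

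It remains to bound $\|\nabla(\phi\,P_{<N}(\chi_{\le\frac12}g))\|_{L^2}$. Distributing the derivative yields $(\nabla\phi)P_{<N}(\chi_{\le\frac12}g)+\phi\,(\nabla P_{<N})(\chi_{\le\frac12}g)$; the first term is again of mismatch type, now with $\phi_1=\nabla\phi$ (still $\lesssim1$ and supported at distance $\ge A$ from $\operatorname{supp}\chi_{\le\frac12}$), hence $\lesssim_m N^{-m}\|g\|_{L^2}$. For the second term one observes that the convolution kernel of $\nabla P_{<N}$ equals $M$ times a kernel of the form $M^d\Psi(M\,\cdot)$ with $M\sim N$ and $\Psi\in\mathscr{S}$, to which the proof of Lemma~\ref{lem:mismatch} applies verbatim; this costs one extra power of $N$ and gives $\|\phi\,(\nabla P_{<N})(\chi_{\le\frac12}g)\|_{L^2}\lesssim_m N^{1-m}\|g\|_{L^2}$. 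Inserting both bounds into the interpolation inequality gives, for every $m$,
\begin{align*}
\big\|\phi\,P_{\ge N}(\chi_{\le\frac12}g)\big\|_{\dot H^{\varepsilon_0}}\lesssim_m\big(N^{-m}\big)^{1-\varepsilon_0}\big(N^{1-m}\big)^{\varepsilon_0}\|g\|_{L^2}=N^{\varepsilon_0-m}\|g\|_{L^2},
\end{align*}
which for $m=2$ (recall $\varepsilon_0<1$) furnishes the claimed factor $N^{-1}$ — in fact an arbitrary negative power of $N$, stronger than stated. The only steps needing care are the legitimacy of the dual pairing (handled above via preservation of $\mathscr{S}$) and the routine verification that Lemma~\ref{lem:mismatch} extends to $\nabla P_{<N}$ at the cost of one power of $N$; everything else is the mismatch estimate together with a one-line interpolation, so I do not anticipate a genuine obstacle.
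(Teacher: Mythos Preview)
Your duality argument is correct and constitutes a genuinely different route from the paper's proof. The paper works on the primal side: it inserts a cutoff $\chi_{\ge\frac{9}{10}}$ next to $h_0$ and then splits $h_0=P_{\le 2N}h_0+\sum_{M\ge 4N}P_Mh_0$. The low-frequency piece lies in $L^2$ by Bernstein, so Lemma~\ref{lem:mismatch} applies to it directly; for each high-frequency piece $P_Mh_0$ one rewrites $\chi_{\le\frac12}P_{\ge N}(\chi_{\ge\frac{9}{10}}P_Mh_0)=-\chi_{\le\frac12}P_{<N}\big(P_{\ge\frac18 M}(\chi_{\ge\frac{9}{10}})\,P_Mh_0\big)$ and gains $M^{-2}$ from the smoothness of the cutoff, then sums in $M$. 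By contrast, you never decompose $h_0$: you push $\chi_{\le\frac12}$ and $P_{\ge N}$ onto the test function $g$, land on a Schwartz function to which the mismatch lemma applies directly, and handle the $\dot H^{\varepsilon_0}$ norm by the one-line interpolation $\|f\|_{\dot H^{\varepsilon_0}}\le\|f\|_{L^2}^{1-\varepsilon_0}\|\nabla f\|_{L^2}^{\varepsilon_0}$. Your approach is shorter, avoids the paraproduct-type bookkeeping, and in fact yields $N^{-m}$ for every $m>0$ rather than just $N^{-1}$; the paper's approach has the minor advantage of staying entirely within the stated form of Lemma~\ref{lem:mismatch} (no need to observe that the kernel of $\nabla P_{<N}$ is $N$ times a rescaled Schwartz function), but this is indeed the routine verification you flag.
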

\begin{proof}
By the support property of $h_0$, we may write
\begin{align}
\chi_{\le \frac12}&\big(P_{\ge N}h_0\big)
=\chi_{\le \frac12}\big(P_{\ge N}\chi_{\ge \frac{9}{10}}h_0\big)\notag\\
=&
\chi_{\le \frac12}\big(P_{\ge N}\chi_{\ge \frac{9}{10}}P_{\le 2N}h_0\big)
+\sum\limits_{M=4N}^\infty\chi_{\le \frac12}P_{\ge N}\big(\chi_{\ge \frac{9}{10}}P_Mh_0\big).\label{20.55}
\end{align}
By Lemma \ref{lem:mismatch} and Bernstein's inequality, we have
\begin{align}
\big\|\chi_{\le \frac12}\big(P_{\ge N}\chi_{\ge \frac{9}{10}}P_{\le 2N}h_0\big)
\big\|_{L^2(\R^d)}
&\lesssim
N^{-10}\big\|P_{\le 2N}h_0\big\|_{L^2(\R^d)}\notag\\
&\lesssim
N^{-1}\big\|h_0\big\|_{\dot  H^{-\varepsilon_0}(\R^d)}.
\label{20.55-I}
\end{align}
Moreover, since $P_{\ge N}=I-P_{< N}$ and $M>2N$, we obtain
\begin{align*}
\chi_{\le \frac12}&P_{\ge N}\big(\chi_{\ge \frac{9}{10}}P_Mh_0\big)
=-\chi_{\le \frac12}P_{< N}\big(\chi_{\ge \frac{9}{10}}P_Mh_0\big)\\
&=-\chi_{\le \frac12}P_{< N}\Big(P_{\ge\frac18M}\big(\chi_{\ge \frac{9}{10}}\big)P_Mh_0\Big),
\end{align*}
where $P_{\ge\frac18M}(\chi_{\ge \frac{9}{10}})$ denotes the high frequency truncation of the bump function $\chi_{\ge \frac{9}{10}}$.

Note that
\begin{align*}
\Big\|\chi_{\le \frac12}P_{<N}\Big(P_{\ge\frac18M}&\big(\chi_{\ge \frac{9}{10}}\big)P_Mh_0\Big)\Big\|_{L^2(\R^d)}
\lesssim \big\|P_{\ge\frac18M}\big(\chi_{\ge \frac{9}{10}}\big)\big\|_{L^\infty(\R^d)}\big\|P_Mh_0\big\|_{L^2(\R^d)}\\
\lesssim &M^{-2}\big\|\Delta P_{\ge\frac18M}\big(\chi_{\ge \frac{9}{10}}\big)\big\|_{L^\infty(\R^d)}\big\|P_Mh_0\big\|_{L^2(\R^d)}\\
\lesssim &M^{-1}\big\|h_0\big\|_{\dot H^{-\varepsilon_0}(\R^d)}.
\end{align*}
Hence, we have
\begin{align*}
\Big\|\chi_{\le \frac12}&P_{\ge N}\Big(\chi_{\ge \frac{9}{10}}P_Mh_0\Big)\Big\|_{L^2(\R^d)}
\lesssim M^{-1}\big\|h_0\big\|_{\dot H^{-\varepsilon_0}(\R^d)}.
\end{align*}
Therefore, taking summation, we obtain
\begin{align}
\sum\limits_{M=4N}^\infty\big\|\chi_{\le \frac12}P_{\ge N}\big(\chi_{\ge \frac{9}{10}}P_Mh_0\big)\big\|_{L^2(\R^d)}
\lesssim N^{-1}\big\|h_0\big\|_{\dot H^{-\varepsilon_0}(\R^d)}.
\label{20.55-II}
\end{align}
Inserting \eqref{20.55-I} and \eqref{20.55-II} into \eqref{20.55}, we prove the lemma.
\end{proof}
Moreover, by the Bernstein estimate,
$$
\big\|P_{<N}h_0\big\|_{L^2(\R^d)}\lesssim
N^{\varepsilon_0}\big\|h_0\big\|_{\dot  H^{-\varepsilon_0}(\R^d)}.
$$
Then this last estimate combining with Lemma \ref{lem:h-mis} gives \eqref{eq:21.43}.

Second, we claim that
\begin{align}
\big\|v_0\big\|_{\dot  H^{-\varepsilon_0}(\R^d)}\lesssim \big\|h_0\big\|_{\dot  H^{-\varepsilon_0}(\R^d)}.\label{11.56}
\end{align}
Indeed,
\begin{align*}
\big\|v_0\big\|_{\dot  H^{-\varepsilon_0}(\R^d)}
\lesssim &
\big\|h_0\big\|_{\dot  H^{-\varepsilon_0}(\R^d)}
+\big\|\chi_{\le \frac12}\big(P_{\ge N}h_0\big)\big\|_{\dot  H^{-\varepsilon_0}(\R^d)}.
\end{align*}
Hence, we only consider the latter term. By Sobolev's embedding and H\"{o}lder's inequality, we have
\begin{align*}
\big\|\chi_{\le \frac12}\big(P_{\ge N}h_0\big)\big\|_{\dot  H^{-\varepsilon_0}(\R^d)}
\lesssim
\big\|\chi_{\le \frac12}\big(P_{\ge N}h_0\big)\big\|_{L^2(\R^d)}.
\end{align*}
Hence \eqref{11.56} follows from Lemma \ref{lem:h-mis}.

We denote
$$
v_L(t)=e^{t\Delta}v_0.
$$
Then $v_L$ is globally existence, and by Plancherel's theorem and \eqref{11.56}
\begin{align}\label{linear part}
\big\|v_L(t)\big\|_{L^\infty_t \dot H^{-\varepsilon_0}_x(\R^+\times\R^d)}\lesssim \big\|v_0\big\|_{\dot  H^{-\varepsilon_0}(\R^d)}\lesssim \big\|h_0\big\|_{\dot  H^{-\varepsilon_0}(\R^d)},
\end{align}
Moreover, let $\epsilon$ be a sufficiently small positive constant, then we claim that
\begin{align}
\big\|v_L(t)\big\|_{L^\frac{2(2+d)}{d}_{tx}(\R^+\times\R^d)}\lesssim N^{-\frac{d-1}{d+2}+\varepsilon_0+\epsilon}\big\|h_0\big\|_{\dot  H^{-\varepsilon_0}(\R^d)}.\label{1.14}
\end{align}
Indeed, let $\gamma=-\frac{d-1}{d+2}+\epsilon$, then by Lemma \ref{prop:superest},
\begin{align*}
\big\|v_L(t)\big\|_{L^\frac{2(2+d)}{d}_{tx}(\R^+\times\R^d)}\lesssim \big\||\nabla|^{\gamma}\chi_{\ge \frac12}\big(P_{\ge N}h_0\big)\big\|_{L^2(\R^d)}.
\end{align*}
Note that
$$
\big\||\nabla|^{\gamma}\chi_{\ge \frac12}\big(P_{\ge N}h_0\big)\big\|_{L^2(\R^d)}
\le \big\||\nabla|^{\gamma}\big(P_{\ge N}h_0\big)\big\|_{L^2(\R^d)}+\big\||\nabla|^{\gamma}\chi_{\le \frac12}\big(P_{\ge N}h_0\big)\big\|_{L^2(\R^d)}.
$$
For the former term, since $\gamma<-\varepsilon_0$, by Bernstein's inequality,
$$
\big\||\nabla|^{\gamma}\big(P_{\ge N}h_0\big)\big\|_{L^2(\R^d)}
\lesssim
N^{\gamma+\varepsilon_0}\big\|h_0\big\|_{\dot H^{-\varepsilon_0}(\R^d)}.
$$
So we only need to estimate the latter term. Let $q$ be the parameter satisfying
$$
\frac1q=\frac12-\frac\gamma d,
$$
then $q>1$.
Since $\gamma<0$, by Sobolev's and H\"older's inequalities,
$$
\big\||\nabla|^{\gamma}\chi_{\le \frac12}\big(P_{\ge N}h_0\big)\big\|_{L^2(\R^d)}
\lesssim \big\|\chi_{\le \frac12}\big(P_{\ge N}h_0\big)\big\|_{L^q(\R^d)}
\lesssim  \big\|\chi_{\le \frac12}\big(P_{\ge N}h_0\big)\big\|_{L^2(\R^d)}.
$$
Furthermore, by Lemma \ref{lem:h-mis},
$$
 \big\|\chi_{\le \frac12}\big(P_{\ge N}h_0\big)\big\|_{L^2(\R^d)}
 \lesssim N^{-1}\big\|h_0\big\|_{\dot H^{-\varepsilon_0}(\R^d)}.
$$
Combining the last two estimates above, we obtain
$$
\big\||\nabla|^{\gamma}\chi_{\le \frac12}\big(P_{\ge N}h_0\big)\big\|_{L^2(\R^d)}
\lesssim N^{-1}\big\|h_0\big\|_{\dot H^{-\varepsilon_0}(\R^d)}.
$$
This gives \eqref{1.14}.

Now we denote $w=h-v_L$, then $w$ is the solution of the following equation,
\begin{equation}\label{heat-w}
   \left\{ \aligned
    &\partial_t w=\Delta w\pm|h|^{\frac4d}h, \\
    &w(0,x)=w_0(x)=h_0-v_0.
   \endaligned
  \right.
 \end{equation}
The following lemma is the local well-posedness and global criterion of the Cauchy problem \eqref{heat-w}.
\begin{lem}\label{lem:local} There exists $\delta>0$, such that for any $h_0$ satisfying the hypothesis in Theorem \ref{thm:main1} and $w_0=h_0-v_0$,  the Cauchy problem \eqref{heat-w} is well-posed on the time interval $[0,\delta]$, and
the solution $$w\in C_tL^2_x([0,\delta]\times\R^d)\cap L^\frac{2(2+d)}{d}_{tx}([0,\delta]\times\R^d)\cap L^2_t\dot H^1_x([0,\delta]\times\R^d).$$ Furthermore, let $T^*$ be the maximal lifespan, and  suppose that
$$w\in L^\frac{2(2+d)}{d}_{tx}([0,T^*)\times\R^d),$$
then $T^*=+\infty$. In particular, if $\|h_0\|_{\dot  H^{-\varepsilon_0}(\R^d)}\ll 1$, then $T^*=+\infty$.
\end{lem}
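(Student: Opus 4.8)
The plan is to solve \eqref{heat-w} for $w$ by a contraction in the $L^2$-critical space $X_\delta:=C_tL^2_x\cap L^\frac{2(2+d)}{d}_{tx}\cap L^2_t\dot H^1_x([0,\delta]\times\R^d)$, treating $v_L=e^{t\Delta}v_0$ as a fixed global forcing term whose only relevant feature is that $\|v_L\|_{L^\frac{2(2+d)}{d}_{tx}}$ is small. In Duhamel form the equation reads
\begin{equation*}
w(t)=e^{t\Delta}w_0\pm\int_0^t e^{(t-s)\Delta}\big|v_L(s)+w(s)\big|^{\frac4d}\big(v_L(s)+w(s)\big)\,ds=:\Phi(w)(t).
\end{equation*}
The two data I use are $w_0\in L^2(\R^d)$ with $\|w_0\|_{L^2}\lesssim N^{\varepsilon_0}\|h_0\|_{\dot H^{-\varepsilon_0}}$ (that is, \eqref{eq:21.43}) and $\|v_L\|_{L^\frac{2(2+d)}{d}_{tx}(\R^+\times\R^d)}\lesssim N^{-\frac{d-1}{d+2}+\varepsilon_0+\epsilon}\|h_0\|_{\dot H^{-\varepsilon_0}}$ (that is, \eqref{1.14}). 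Since $\varepsilon_0<\frac{d-1}{d+2}$, I first fix $\epsilon$ so small that the exponent of $N$ is negative; then, for a small absolute constant $\eta=\eta(d)>0$ to be chosen, I pick $N=N(h_0)$ so large that $\|v_L\|_{L^\frac{2(2+d)}{d}_{tx}(\R^+)}\le\eta$. This fixes $w_0\in L^2$, so by \eqref{nabla-L2'} we have $\|e^{t\Delta}w_0\|_{L^\frac{2(2+d)}{d}_{tx}(\R^+)}<\infty$, and by dominated convergence in $t$ I may then choose $\delta=\delta(h_0)>0$ with $\|e^{t\Delta}w_0\|_{L^\frac{2(2+d)}{d}_{tx}([0,\delta])}\le\eta$.

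With $N,\delta$ fixed I would run the fixed-point on the ball
\begin{equation*}
B=\Big\{w\in X_\delta:\ \|w\|_{L^\frac{2(2+d)}{d}_{tx}([0,\delta])}\le 2\eta,\quad \|w\|_{C_tL^2_x\cap L^2_t\dot H^1_x([0,\delta])}\le C_0\|w_0\|_{L^2}\Big\},
\end{equation*}
with the metric induced by the $L^\frac{2(2+d)}{d}_{tx}$-norm, under which $B$ is complete by weak lower semicontinuity of the auxiliary norms. The estimates are the standard mass-critical ones: the identity $\big\||F|^{4/d}F\big\|_{L^\frac{2(2+d)}{d+4}_{tx}}=\|F\|_{L^\frac{2(2+d)}{d}_{tx}}^{1+4/d}$; the pointwise bound $\big||F_1|^{4/d}F_1-|F_2|^{4/d}F_2\big|\lesssim(|F_1|^{4/d}+|F_2|^{4/d})|F_1-F_2|$ followed by H\"older; and the smoothing estimate \eqref{Lpq} together with \eqref{lem:Lp-bound}, \eqref{nabla-L2}, \eqref{nabla-L2'} applied to the free evolution $e^{t\Delta}w_0$. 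Taking $F=v_L+w$ with $\|F\|_{L^\frac{2(2+d)}{d}_{tx}([0,\delta])}\le 3\eta$ gives $\|\Phi(w)\|_{L^\frac{2(2+d)}{d}_{tx}([0,\delta])}\le\eta+C(3\eta)^{1+4/d}\le2\eta$ and $\|\Phi(w)\|_{C_tL^2_x\cap L^2_t\dot H^1_x}\le C_0\|w_0\|_{L^2}$ once $\eta$ is small depending only on $d$; the same computation gives $\|\Phi(w_1)-\Phi(w_2)\|_{L^\frac{2(2+d)}{d}_{tx}}\le C(3\eta)^{4/d}\|w_1-w_2\|_{L^\frac{2(2+d)}{d}_{tx}}\le\tfrac12\|w_1-w_2\|_{L^\frac{2(2+d)}{d}_{tx}}$. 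The unique fixed point is the solution; $w\in C([0,\delta],L^2)$ follows from strong continuity of $e^{t\Delta}$ on $L^2$ and from \eqref{Lpq} applied to time-truncations of the Duhamel integral, and uniqueness in $X_\delta$ is the usual consequence of the contraction together with a continuity argument matching two a priori $X_\delta$-solutions.

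For the global criterion, suppose the maximal lifespan is $[0,T^*)$, that $\|w\|_{L^\frac{2(2+d)}{d}_{tx}([0,T^*)\times\R^d)}<\infty$, and (for contradiction) $T^*<\infty$. Since $\|v_L\|_{L^\frac{2(2+d)}{d}_{tx}(\R^+)}\le\eta$, also $\|h\|_{L^\frac{2(2+d)}{d}_{tx}([0,T^*))}<\infty$, so $[0,T^*)=\bigcup_{j=1}^J[t_{j-1},t_j)$ with $J<\infty$ and $\|h\|_{L^\frac{2(2+d)}{d}_{tx}([t_{j-1},t_j))}\le\eta$ on each piece. Applying \eqref{Lpq}, \eqref{lem:Lp-bound}, \eqref{nabla-L2}, \eqref{nabla-L2'} on each subinterval to the Duhamel formula based at $t_{j-1}$ yields $\|w\|_{C_tL^2_x\cap L^2_t\dot H^1_x([t_{j-1},t_j))}\lesssim\|w(t_{j-1})\|_{L^2}+\eta^{1+4/d}$, and iterating over $j$ gives $\sup_{[0,T^*)}\|w(t)\|_{L^2}<\infty$ and $\|w\|_{L^2_t\dot H^1_x([0,T^*))}<\infty$; the same identity shows $w(t)$ is Cauchy in $L^2$ as $t\uparrow T^*$, so $w(T^*)\in L^2$. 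I then restart the first-stage argument from time $T^*$ with data $h(T^*)=v_L(T^*)+w(T^*)$: the point is that $v_L(T^*)=e^{T^*\Delta}v_0$ keeps evolving as $v_L$, whose critical norm on $[T^*,\infty)$ is still $\le\eta$, so it is never re-decomposed, while $w(T^*)\in L^2$ gives $\|e^{(t-T^*)\Delta}w(T^*)\|_{L^\frac{2(2+d)}{d}_{tx}([T^*,T^*+\delta'])}\to0$ as $\delta'\to0$; hence a solution exists on $[T^*,T^*+\delta']$ for some $\delta'>0$, contradicting maximality, so $T^*=+\infty$. Finally, if $\|h_0\|_{\dot H^{-\varepsilon_0}}\ll1$ I take $N=1$, so that $\|v_L\|_{L^\frac{2(2+d)}{d}_{tx}(\R^+)}$, $\|w_0\|_{L^2}$, and hence $\|e^{t\Delta}w_0\|_{L^\frac{2(2+d)}{d}_{tx}(\R^+)}$ are all $\le\eta$; running the contraction directly on $[0,\infty)$ yields $\|w\|_{L^\frac{2(2+d)}{d}_{tx}(\R^+)}\le2\eta<\infty$, whence $T^*=+\infty$ by the criterion just proved.

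Once \eqref{eq:21.43} and \eqref{1.14} are in hand, I do not expect any serious analytic obstacle: all the nonlinear estimates are the familiar mass-critical ones and the radiality/support hypotheses on $h_0$ enter only through the smallness \eqref{1.14} of $\|v_L\|_{L^\frac{2(2+d)}{d}_{tx}}$. The one point that requires care is the bookkeeping in the continuation step --- keeping the rough part $v_L$ global and un-split throughout, so that the blow-up alternative is governed solely by whether the $L^2$-valued part $w$ remains in $L^\frac{2(2+d)}{d}_{tx}$, exactly as in the standard $L^2$-critical local/global theory; a secondary technical point is verifying completeness of $B$ in the weak metric, which is routine by weak-$*$ compactness and lower semicontinuity.
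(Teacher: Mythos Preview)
Your proposal is correct and follows essentially the same approach as the paper: both use the Duhamel formulation together with the smoothing estimates \eqref{Lpq}, the smallness \eqref{1.14} of $\|v_L\|_{L^{2(d+2)/d}_{tx}}$, and the $L^2$-membership \eqref{eq:21.43} of $w_0$ to close in the mass-critical space, then bootstrap into $C_tL^2_x\cap L^2_t\dot H^1_x$ and continue past $T^*$ once $w(T^*)\in L^2$. The only cosmetic difference is that you set up an explicit contraction on a ball, whereas the paper phrases the same step as an a priori bound plus continuity argument; your treatment of the continuation step (partition into small-norm subintervals and iterate) is slightly more detailed than the paper's one-shot estimate, but the content is the same.
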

\begin{proof}
For local well-posedness, we only show that the solution $w\in L^\infty_tL^2_x([0,\delta]\times\R^d)\cap L^\frac{2(2+d)}{d}_{tx}([0,\delta]\times\R^d)\cap L^2_t\dot H^1_x([0,\delta]\times\R^d)$ for some $\delta>0$. Indeed, the local well-posedness with the lifespan $[0, \delta)$ is then followed by the standard fixed point argument.
By Duhamel's formula, we have
$$
w(t)=e^{t\Delta}w_0\pm\int_0^te^{(t-s)\Delta}|h(s)|^\frac4dh(s)\,ds.
$$
Then by Lemma \ref{lem:Stri}, for any $t_*\le \delta$,
\begin{align*}
\big\|w\big\|_{L^\frac{2(2+d)}{d}_{tx}([0,t_*]\times\R^d)}
\lesssim &\|e^{t\Delta}w_0\|_{L^\frac{2(2+d)}{d}_{tx}([0,t_*]\times\R^d)}+\big\||h|^\frac4dh\big\|_{L^\frac{2(2+d)}{d+4}_{tx}([0,t_*]\times\R^d)}\\
\lesssim &\|e^{t\Delta}w_0\|_{L^\frac{2(2+d)}{d}_{tx}([0,\delta]\times\R^d)}+\big\|h\big\|_{L^\frac{2(2+d)}{d}_{tx}([0,t_*]\times\R^d)}^{\frac4d+1}.
\end{align*}
Note that
$$
\big\|h\big\|_{L^\frac{2(2+d)}{d}_{tx}([0,t_*]\times\R^d)}\lesssim \big\|v_L\big\|_{L^\frac{2(2+d)}{d}_{tx}(\R^+\times\R^d)}
+\big\|w\big\|_{L^\frac{2(2+d)}{d}_{tx}([0,t_*]\times\R^d)},
$$
let $\eta_0=(\frac4d+1)\big(\frac{d-1}{d+2}-\varepsilon_0-\epsilon\big)>0$, then using \eqref{1.14}, we obtain
\begin{align*}
\big\|w\big\|_{L^\frac{2(2+d)}{d}_{tx}([0,t_*]\times\R^d)}
\lesssim &\|e^{t\Delta}w_0\|_{L^\frac{2(2+d)}{d}_{tx}([0,\delta]\times\R^d)}+N^{-\eta_0}\big\|h_0\big\|_{\dot  H^{-\varepsilon_0}(\R^d)}^{\frac4d+1}
+\big\|w\big\|_{L^\frac{2(2+d)}{d}_{tx}([0,t_*]\times\R^d)}^{\frac4d+1}.
\end{align*}
Noting that either $\|h_0\|_{\dot  H^{-\varepsilon_0}(\R^d)}\ll 1$, or choosing $\delta$ small enough and $N$ large enough, we have
$$
\|e^{t\Delta}w_0\|_{L^\frac{2(2+d)}{d}_{tx}([0,\delta]\times\R^d)}+N^{-\eta_0}\big\|h_0\big\|_{\dot  H^{-\varepsilon_0}(\R^d)}^{\frac4d+1}\ll1,
$$
then by the continuity argument, we
\begin{align}
\big\|w\big\|_{L^\frac{2(2+d)}{d}_{tx}([0,\delta]\times\R^d)}\lesssim \|e^{t\Delta}w_0\|_{L^\frac{2(2+d)}{d}_{tx}([0,\delta]\times\R^d)}+N^{-\eta_0}\big\|h_0\big\|_{\dot  H^{-\varepsilon_0}(\R^d)}^{\frac4d+1}. \label{11.47}
\end{align}
Further, by Lemma \ref{lem:Stri} again,
\begin{align*}
\big\|w\big\|_{L^2_t\dot H^1_x([0,\delta]\times\R^d)}+\sup\limits_{t\in [0,\delta]}\big\|w\big\|_{L^2_x(\R^d)}
\lesssim &\|w_0\|_{L^2_x(\R^d)}+\big\||h|^\frac4dh\big\|_{L^\frac{2(2+d)}{d+4}_{tx}([0,\delta]\times\R^d)}\\
\lesssim &\|w_0\|_{L^2_x(\R^d)}+\big\|v_L\big\|_{L^\frac{2(2+d)}{d}_{tx}([0,\delta]\times\R^d)}^{\frac4d+1}
+\big\|w\big\|_{L^\frac{2(2+d)}{d}_{tx}([0,\delta]\times\R^d)}^{\frac4d+1}.
\end{align*}
Hence, using \eqref{1.14} and \eqref{11.47}, we obtain
\begin{align*}
\big\|w\big\|_{L^2_t\dot H^1_x([0,\delta]\times\R^d)}+\sup\limits_{t\in [0,\delta]}\big\|w\big\|_{L^2_x(\R^d)}
\le C,
\end{align*}
for some $C=C(N,\big\|h_0\big\|_{\dot  H^{-\varepsilon_0}(\R^d)})>0$.

Suppose that
$$w\in L^\frac{2(2+d)}{d}_{tx}([0,T^*)\times\R^d),$$
then if $T^*<+\infty$, we have
\begin{align*}
\big\|w(T^*)\big\|_{L^2_x(\R^d)}
\lesssim &\|e^{t\Delta}w_0\|_{L^\frac{2(2+d)}{d}_{tx}([0,T^*]\times\R^d)}+\big\||h|^\frac4dh\big\|_{L^\frac{2(2+d)}{d+4}_{tx}([0,T^*)\times\R^d)}\\
\lesssim &\|w_0\|_{L^2_{x}(\R^d)}+N^{-\eta_0}\big\|h_0\big\|_{\dot  H^{-\varepsilon_0}(\R^d)}^{\frac4d+1}
+\big\|w\big\|_{L^\frac{2(2+d)}{d}_{tx}([0,T^*)\times\R^d)}^{\frac4d+1}.
\end{align*}
Hence, $w$ exists on $[0,T^*]$, and $w(T^*)\in L^2(\R^d)$. Hence, using the local theory obtained before from time $T^*$, the lifespan can be extended to $T^*+\delta$, this is contradicted with the definition of the  maximal lifespan $T^*$. Hence, $T^*=+\infty$.
\end{proof}

\subsection{Uniqueness}
Here we adopt the argument in \cite{Monn}, where the main tool is the the maximal $L^p$-regularity of the heat flow.
Let $h_1,h_2$ be two distinct solutions of \eqref{heat} with the same initial data $h_0$, and write
$$
h_1=e^{s\Delta}h_0+w_1;\quad h_2=e^{s\Delta}h_0+w_2.
$$
By the Duhamel formula, we have
\begin{align*}
w_1(t)=&\int_0^t e^{(t-s)\Delta}|e^{s\Delta}h_0+w_1|^{\frac4d}\big(e^{s\Delta}h_0+w_1\big)\,ds;\\
w_2(t)=&\int_0^t e^{(t-s)\Delta}|e^{s\Delta}h_0+w_2|^{\frac4d}\big(e^{s\Delta}h_0+w_2\big)\,ds.
\end{align*}
Denote $w=w_1-w_2$, then $w$ obeys
\begin{align*}
w(t)=&\int_0^t e^{(t-s)\Delta}\Big[|e^{s\Delta}h_0+w_1|^{\frac4d}\big(e^{s\Delta}h_0+w_1\big)-|e^{s\Delta}h_0+w_2|^{\frac4d}\big(e^{s\Delta}h_0+w_2\big)\Big]\,ds.
\end{align*}
Note that there exists an absolute constant $C>0$ such that
\begin{align*}
\Big||e^{s\Delta}h_0+w_1|^{\frac4d}\big(e^{s\Delta}h_0&+w_1\big)-|e^{s\Delta}h_0+w_2|^{\frac4d}\big(e^{s\Delta}h_0+w_2\big)\Big|\\
&\le C\Big(|e^{s\Delta}h_0|^{\frac4d}+|w_1|^{\frac4d}+|w_2|^{\frac4d}\Big)|w|.
\end{align*}
Then by the positivity of the heat kernel, we have
\begin{align*}
|w(t)|\le C\int_0^t e^{(t-s)\Delta}\Big(|e^{s\Delta}h_0|^{\frac4d}+|w_1(s)|^{\frac4d}+|w_2(s)|^{\frac4d}\Big)|w(s)|\,ds.
\end{align*}
Then we get that for $2\le p<\infty$, $\tau\in (0,\delta]$,
\begin{align*}
\|w\|_{L^p_t((0,\tau); L^2(\R^d))}
\lesssim &\Big\|\int_0^t e^{(t-s)\Delta}|e^{s\Delta}h_0|^{\frac4d}|w(s)|\,ds\Big\|_{L^p_t((0,\tau); L^2(\R^d))}\\
&\quad +\Big\|\int_0^t e^{(t-s)\Delta}\Big(|w_1(s)|^{\frac4d}+|w_2(s)|^{\frac4d}\Big)|w(s)|\,ds\Big\|_{L^p_t((0,\tau); L^2(\R^d))}.
\end{align*}
For the first term in the right-hand side above, using Lemma \ref{lem:Stri-1} and choosing $p$ large enough, we have
\begin{align*}
\Big\|\int_0^t e^{(t-s)\Delta}|e^{s\Delta}h_0|^{\frac4d}|w(s)|\,ds\Big\|_{L^p_t((0,\tau); L^2(\R^d))}
\lesssim \Big\||e^{s\Delta}h_0|^{\frac4d}|w(s)|\Big\|_{L^{p_1}_t((0,\tau); L^{r_1}(\R^d))},
\end{align*}
where we have chose $(p_1,r_1)$ that
$$
\frac1{p_1}=\frac2{d+2}+\frac1p;\quad \frac1{r_1}=\frac2{d+2}+\frac12.
$$
(Note that $d>4$ and $p$ is large, we have that $p_1\in (1,2), r_1\in (1,2)$). Hence, by H\"older's inequality, we obtain that
\begin{align*}
\Big\|\int_0^t e^{(t-s)\Delta}|e^{s\Delta}&h_0|^{\frac4d}|w(s)|\,ds\Big\|_{L^p_t((0,\tau); L^2(\R^d))}\\
\lesssim &\Big\||e^{s\Delta}h_0|^{\frac4d}|w(s)|\Big\|_{L^{p_1}_t((0,\tau); L^{r_1}(\R^d))}\\
\lesssim & \big\|e^{s\Delta}h_0\big\|_{L^{\frac{2(d+2)}{d}}_{tx}\big((0,\tau)\times \R^d\big)}^{\frac4d}\big\|w\big\|_{L^{p}_t((0,\tau); L^2(\R^d))}.
\end{align*}
For the second term in the right-hand side above, using Lemma \ref{lem:max-Lp},
\begin{align*}
\Big\|\int_0^t e^{(t-s)\Delta}\Big(|w_1(s)&|^{\frac4d}+|w_2(s)|^{\frac4d}\Big)|w(s)|\,ds\Big\|_{L^p_t((0,\tau); L^2(\R^d))}\\
\lesssim & \Big\|(-\Delta)^{-1}\Big(\big(|w_1(s)|^{\frac4d}+|w_2(s)|^{\frac4d}\big)|w(s)|\Big)\Big\|_{L^{p}_t((0,\tau); L^2(\R^d))}.
\end{align*}
Since $d>4$, by Sobolev's embedding, we further have
\begin{align*}
\Big\|\int_0^t e^{(t-s)\Delta}\Big(|w_1(s)&|^{\frac4d}+|w_2(s)|^{\frac4d}\Big)|w(s)|\,ds\Big\|_{L^p_t((0,\tau); L^2(\R^d))}\\
\lesssim & \big\|\big(|w_1(s)|^{\frac4d}+|w_2(s)|^{\frac4d}\big)|w(s)|\big\|_{L^{p}_t((0,\tau); L^\frac{2d}{d+4}(\R^d))}\\
\lesssim & \big(\|w_1\|_{L^\infty_t((0,\tau); L^2(\R^d))}^{\frac4d}+\|w_1\|_{L^\infty_t((0,\tau); L^2(\R^d))}^{\frac4d}\big)\|w\|_{L^{p}_t((0,\tau); L^2(\R^d))}.
\end{align*}
Collection the estimates above, we obtain that
\begin{align}
\|w\|_{L^{p}_t((0,\tau); L^2(\R^d))}
\lesssim \rho(\tau)\cdot \|w\|_{L^{p}_t((0,\tau); L^2(\R^d))}, \label{22.38}
\end{align}
where
$$
\rho(\tau)=\big\|e^{s\Delta}h_0\big\|_{L^{\frac{2(d+2)}{d}}_{tx}\big((0,\tau)\times \R^d\big)}^{\frac4d}+\|w_1\|_{L^\infty_t((0,\tau); L^2(\R^d))}^{\frac4d}+\|w_2\|_{L^\infty_t((0,\tau); L^2(\R^d))}^{\frac4d}.
$$
By \eqref{1.14} and Lemma \ref{lem:Stri}, we have
$$
\big\|e^{s\Delta}h_0\big\|_{L^{\frac{2(d+2)}{d}}_{tx}\big((0,\tau)\times \R^d\big)} \to 0, \quad \mbox{when } \tau\to 0.
$$
Further, since $w_1,w_2\in C([0,\delta], L^2(\R^d))$, we get
$$
\lim\limits_{\tau\to0} \rho(\tau)\to 0.
$$
Hence, choosing $\tau$ small enough and from \eqref{22.38}, we obtain that $w\equiv 0$ on $t\in [0,\tau)$. By iteration, we have $w_1\equiv w_2$ on $[0,\delta]$.
This proves the first statement (1) in Theorem \ref{thm:main1}.

\subsection{$L^2$-estimates}
In this subsection, we prove the second statement (2) in Theorem \ref{thm:main1}.

Firstly,  by Lemma \ref{lem:local}, when $\|h_0\|_{\dot  H^{-\varepsilon_0}(\R^d)}\ll 1$, we immediately have the global existence of the solution for the both cases $\mu=\pm 1$.
However, in the defocusing case ($\mu=1$). the smallness of $\|h_0\|_{\dot  H^{-\varepsilon_0}(\R^d)}\ll 1$ can be cancelled.  In fact, note that $h=v_L+w$ and
\begin{align}
\big\|v_L\big\|_{L^2(\R^d)}=&\big\|e^{-t|\xi|^2}\widehat{v_0}(\xi)\big\|_{L^2_\xi(\R^d)}\notag\\
\lesssim& \big\|e^{-t|\xi|^2}|\xi|^{\varepsilon_0}\big\|_{L^\infty_\xi(\R^d)}\|v_0\|_{\dot H^{-\varepsilon_0}}
\lesssim t^{-\frac{\varepsilon_0}{2}}\|h_0\|_{\dot H^{-\varepsilon_0}}.\label{9.26}
\end{align}
Hence, from Lemma \ref{lem:local}, we have $h(\delta)\in L^2(\R^d)$. Let $I=[0,T^*)$ be the maximal lifespan of the solution $h$ of the Cauchy problem \eqref{heat}. Then from the $L^2$ estimate of the solution (by inner producing with $h$ in \eqref{heat}), we have
$$
\sup\limits_{t\in I}\|h\|_{L^2}^2+\|\nabla h\|_{L^2_{tx}(I\times \R^d)}^2\le \|h_0\|_{L^2}^2.
$$
This gives the uniform boundedness of $\big\|h\big\|_{L^\frac{2(2+d)}{d}_{tx}(I\times\R^d)}$ and thus $\big\|w\big\|_{L^\frac{2(2+d)}{d}_{tx}(I\times\R^d)}$. Then by the global criteria given in  Lemma \ref{lem:local}, we have $T^*=+\infty$.

 Secondly,  we consider the time estimate of the solution ($\mu=\pm1$). When $t\le 1$, it follows from \eqref{9.26} and Lemma \ref{lem:local}, that
$$\|h(t)\|_{L^2}\lesssim t^{-\frac{\varepsilon_0}{2}}\|h_0\|_{\dot H^{-\varepsilon_0}}, \quad  \mbox{ for any } t\in (0,1].
$$
So it remains to show the decay  estimate when $t>1$. By Duhamel's formula,  we have
\begin{align*}
\|h(t)\|_{L^2(\R^d)}\le \big\|e^{t\Delta}h_0\big\|_{L^2(\R^d)}+\Big\|\int_0^t e^{(t-s)\Delta}|h(s)|^\frac4dh(s)\,ds\Big\|_{L^2(\R^d)}.
\end{align*}
Similar as \eqref{9.26}, we have
$$
\big\|e^{t\Delta}h_0\big\|_{L^2(\R^d)}\lesssim t^{-\frac{\varepsilon_0}{2}}\|h_0\|_{\dot H^{-\varepsilon_0}}.
$$
Then using the estimate above and Lemma \ref{Linear-decay}, we further have
\begin{align*}
\|h(t)\|_{L^2(\R^d)}\lesssim & t^{-\frac{\varepsilon_0}{2}}\|h_0\|_{\dot H^{-\varepsilon_0}}+\int_0^t \Big\|e^{(t-s)\Delta}|h(s)|^\frac4dh(s)\Big\|_{L^2(\R^d)}\,ds\\
\lesssim &  t^{-\frac{\varepsilon_0}{2}}\|h_0\|_{\dot H^{-\varepsilon_0}}+\int_0^t |t-s|^{-1}\Big\||h(s)|^\frac4dh(s)\Big\|_{L^\frac{2d}{d+4}(\R^d)}\,ds\\
\lesssim &  t^{-\frac{\varepsilon_0}{2}}\|h_0\|_{\dot H^{-\varepsilon_0}}+\int_0^t |t-s|^{-1}\|h\|_{L^2(\R^d)}^{\frac4d+1}\,ds.
\end{align*}
In the last step we have used the fact $d\ge 4$ such that $\frac{2d}{d+4}\ge 1$.

Now we denote
$$
\|h\|_{X(T)}=\sup\limits_{t\in[0,T]}\Big(t^\frac{\varepsilon_0}{2}\|h(t)\|_{L^2(\R^d)}\Big).
$$
Fixing $T>1$, then for any $t\in (1,T]$,
\begin{align*}
\|h(t)\|_{L^2(\R^d)}\lesssim & t^{-\frac{\varepsilon_0}{2}}\|h_0\|_{\dot H^{-\varepsilon_0}}+\int_0^t |t-s|^{-1}s^{-\frac{\varepsilon_0}{2}(\frac4d+1)}\,ds\|h(t)\|_{X(T)}^{\frac4d+1}\\
\lesssim & t^{-\frac{\varepsilon_0}{2}}\|h_0\|_{\dot H^{-\varepsilon_0}}+t^{-\frac{\varepsilon_0}{2}(\frac4d+1)+}\|h(t)\|_{X(T)}^{\frac4d+1}\\
\lesssim & t^{-\frac{\varepsilon_0}{2}}\Big(\|h_0\|_{\dot H^{-\varepsilon_0}}+\|h(t)\|_{X(T)}^{\frac4d+1}\Big).
\end{align*}
Thus we obtain that
$$
\|h(t)\|_{X(T)}\lesssim  \|h_0\|_{\dot H^{-\varepsilon_0}}+\|h(t)\|_{X(T)}^{\frac4d+1}.
$$
By the continuity argument, we get
$$
\|h(t)\|_{X(T)}\lesssim  \|h_0\|_{\dot H^{-\varepsilon_0}}.
$$
Since the estimate is independent on $T$, we give that
$$\|h(t)\|_{L^2}\lesssim t^{-\frac{\varepsilon_0}{2}}\|h_0\|_{\dot H^{-\varepsilon_0}}, \quad  \mbox{ for any } t>1.
$$
Therefore, we obtain that
$$\|h(t)\|_{L^2}\lesssim t^{-\frac{\varepsilon_0}{2}}\|h_0\|_{\dot H^{-\varepsilon_0}}, \quad  \mbox{ for any } t>0.
$$
This proves the second statement (2) in Theorem \ref{thm:main1}.

\end{document}